\definecolor{Red}{cmyk}{0,1,1,0}
\definecolor{verde}{cmyk}{1,0,1,0}
\definecolor{loka}{cmyk}{.5,0,1,.5}
\definecolor{azul}{cmyk}{1,1,0,0}
\numberwithin{equation}{section}
\newcommand{\be}{\begin{equation}}
\newcommand{\ee}{\end{equation}}
\newtheorem{theorem}{Theorem}
\newtheorem{definition}{Definition}
\newtheorem{lemma}{Lemma}
\begin{document}
\title{Gruss-type inequality by mean of a fractional integral}
\author{J. Vanterler da C. Sousa$^1$}
\address{$^1$ Department of Applied Mathematics, Institute of Mathematics,
 Statistics and Scientific Computation, University of Campinas --
UNICAMP, rua S\'ergio Buarque de Holanda 651,
13083--859, Campinas SP, Brazil\newline
e-mail: {\itshape \texttt{ra160908@ime.unicamp.br, ra142310@ime.unicamp.br, capelas@ime.unicamp.br }}}

\author{D. S. Oliveira$^1$}

\author{E. Capelas de Oliveira$^1$}

\begin{abstract}In this paper, using a fractional integral as proposed by Katugampola we establish a generalization of integral inequalities of Gruss-type. We prove two theorems associated with these inequalities and then immediately we enunciate and prove others inequalities associated with these fractional operator.
\vskip.5cm
\noindent
\emph{Keywords}: Katugampola fractional integral; generalization inequalities of Gruss-type
\newline 
MSC 2010 subject classifications. 26A33; 26A39; 26A42
\end{abstract}
\maketitle
\section{Introduction}

In 1935, Gruss proved the following integral inequality \cite{Gruss}
\begin{eqnarray}
&&\left\vert \frac{1}{b-a}\int_{a}^{b}f\left( x\right) g\left( x\right)
dx-\left( \frac{1}{b-a}\int_{a}^{b}f\left( x\right) dx\right) \left( \frac{1%
}{b-a}\int_{a}^{b}g\left( x\right) dx\right) \right\vert\nonumber\\
&&\leq \frac{\left(
M-m\right) \left( P-p\right) }{4}, \label{Gruss}
\end{eqnarray}
where $f$ and $g$ are two integrable functions on $[a,b]$ satisfying the conditions 
\begin{eqnarray}
m\leq{f(x)}\leq{M}, \quad p\leq{g(x)}\leq{P}; \quad m,M,p,P\in\mathbb{R}, \quad x\in[a,b].
\end{eqnarray}
The Gruss type inequality has some important applications. We mention: difference equations, integral arithmetic mean and $h$-integral arithmetic mean \cite{MNCL,AESG}. On the other hand, we also study the Gruss type inequality in spaces with intern product, consequently 
some applications for the Mellin transform of sequences and polynomials in Hilbert spaces 
\cite{ABS}.

In this sense, there are another important inequalities using integer order integrals among
than we mention: H\"{o}lder's inequality, Jensen's inequality, Minkowski's inequality and 
reverse Minkowiski's inequality, \cite{HO,SEM,SWK,BL,HO1,BP,OJC}. For such inequalities, 
as well as for the study of functions, integrals and norms, the space of the $p-$integrable 
functions, $L^{p}(a,b)$, have a particular importance. However, in this work, we use the 
space of Lebesgue mensurable functions, which admits, as a particular case, the space 
$L^{p}(a,b)$.
 
The appearance of the fractional calculus allow several consequences, results and important 
theories in mathematics, physics, engineering, among others areas. Due to this fact, it was 
possible to define several fractional integrals, for example: Riemann-Liouville, Katugampola, 
Hadamard, Erd\'{e}lyi-Kober, Liouville and Weyl types. There are other fractional 
integrals that can be found in \cite{kilbas}. Thus, with these fractional integrals
some inequalities involving such formulations have been developed over the years, for example:
Reverse Minkowiski, Hermite-Hadarmad, Ostrowski e Fej\'{e}r inequalities, 
\cite{BL,DZ,CVP,TSB,SME,CHU,IIM,IIM1,vanterler}. We also mention that, in the literature, 
there are generalizations for the Eq.(\ref{Gruss}) using, for example, fractional integrals 
of Riemann-Liouville, Hadamard and also the $q-$fractional integral, \cite{dahmani2010new,vaijanath,Zhu2012}. 

In this paper, using a fractional integral recently introduced \cite{katugampola}, we
propose a new generalization of Eq.(\ref{Gruss}), i.e., new Gruss-type inequalities
that generalize inequalities obtained by means of Riemann-Liouville fractional
integral \cite{dahmani2010new}.

The paper is organized as follow: In section 2, we present the Katugampola's fractional 
integral as well as the convenient space for such definition and the convenient parameters 
to recover the six fractional integrals as particular cases. In section 3, our main result, 
we present inequalities of Gruss-type. In section 4, we discuss other inequalities involving 
the Katugampola's fractional integral. Conclusions and future perspectives close the paper.

\section{Preliminaries}

In this section, we define the space ${X}_{c}^{p}(a,b)$, where the Katumgampola's fractional integrals are defined. With a convenient choice of parameters, we recover six well-know fractional integrals, previously mentioned \cite{kilbas, katugampola}.
\begin{definition}
The space ${X}_{c}^{p}(a,b)$ $(c\in\mathbb{R}, 1\leq{p}\leq{\infty})$ consists of those complex-valued Lebesgue measurable functions $\varphi$ on $(a,b)$ for which ${\lVert \varphi \rVert}_{{X_{c}^{p}}}<\infty$, 
with
$${\lVert \varphi \rVert}_{{X_{c}^{p}}}=\left(\int_{a}^{b}|x^{c}\varphi(x)|^{p}\,\frac{dx}{x}\right)^{1/p}\; 
(1\leq{p}<{\infty})$$
and
$$\left\|\varphi\right\|_{X_{c}^{\infty}}=\sup \textnormal{ess}_{x\in(a,b)}\,[x^{c}|\varphi(x)|].$$
\end{definition}
In particular, when $c=1/p$, the space ${X}_{c}^{p}(a,b)$ coincides with the space $L^{p}(a,b)$.
\begin{definition} Let $\varphi\in X^{p}_{c}(a,b)$, $\alpha>0$ and $\beta,\rho,\eta,\kappa\in\mathbb{R}$. Then, the fractional integrals of a function $\varphi$, left- and right-sided, are defined by
\begin{equation}\label{B1}
^{\rho }\mathcal{I}_{a+,\eta ,\kappa }^{\alpha ,\beta }\varphi\left( x\right) :=\frac{\rho
^{1-\beta }x^{\kappa }}{\Gamma \left( \alpha \right) }\int_{a}^{x}\frac{%
\tau^{\rho \left( \eta +1\right) -1}}{\left( x^{\rho }-\tau^{\rho }\right)
^{1-\alpha }}\varphi\left( \tau\right) d\tau,\text{ }0\leq a<x<b\leq \infty
\end{equation}%
and
\begin{equation}\label{B2}
^{\rho }\mathcal{I}_{b-,\eta, \kappa }^{\alpha ,\beta }\varphi\left( x\right) :=\frac{\rho
^{1-\beta }x^{\rho \eta }}{\Gamma \left( \alpha \right) }\int_{x}^{b}\frac{%
\tau^{\kappa +\rho -1}}{\left( \tau^{\rho }-x^{\rho }\right) ^{1-\alpha }}\varphi\left(
\tau\right) d\tau,\text{ \ }0\leq a<x<b\leq \infty 
\end{equation}
respectively, if the integrals exist. 
\end{definition}
As previously mentioned, with a convenient choice of parameters, the fractional integral given
by \textnormal{Eq.(\ref{B1})} admits, as particular cases, six well-known fractional integrals,
namely:
\begin{enumerate}
\item \rm \text{If $\kappa=0$, $\eta=0$ and taking the limit $\rho\rightarrow 1$}, in 
\textnormal{Eq.(\ref{B1})}, we obtain the Riemann-Liouville fractional integral, \cite[p.~69]{kilbas}.
\item If $\beta=\alpha$, $\kappa=0$, $\eta=0$, taking the limit $\rho\rightarrow 0^{+}$ and using 
the $\ell$'Hospital's rule, in \textnormal{Eq.(\ref{B1})}, we obtain the Hadamard fractional integral, 
\cite[p.~110]{kilbas}.

\item When $\beta=0$ and $\kappa=-\rho(\alpha+\eta)$, in \textnormal{Eq.(\ref{B1})}, we obtain the Erd\'elyi-Kober fractional integral, \cite[p.~105]{kilbas}.

\item Also, for $\beta=\alpha$, $\kappa=0$ and $\eta=0$, in \textnormal{Eq.(\ref{B1})}, we recover the Katugampola
fractional integral, \cite{katugampola1}.

\item With the choice $\kappa=0$, $\eta=0$, $a=-\infty$ and taking the limit $\rho\rightarrow 1$, in
\textnormal{Eq.(\ref{B1})}, we have the Weyl fractional integral, \cite[p.~50]{rubeco}.

\item If $\kappa=0$, $\eta=0$, $a=0$ and taking the limit $\rho\rightarrow 1$, in \textnormal{Eq.(\ref{B1})}, 
we obtain the Liouville fractional integrals, \cite[p.~79]{kilbas}. 
\end{enumerate}

Note that, we present and discuss our new results associated with the fractional integral 
using the left-sided operator, only. Moreover, we admit $a=0$ in Eq.(\ref{B1}), in order 
to obtain
\begin{equation}\label{B3}
^{\rho }\mathcal{I}_{\eta ,\kappa }^{\alpha ,\beta }\varphi\left( x\right) =\frac{\rho
^{1-\beta }x^{\kappa }}{\Gamma \left( \alpha \right) }\int_{0}^{x}\frac{%
\tau^{\rho \left( \eta +1\right) -1}}{\left( x^{\rho }-\tau^{\rho }\right)
^{1-\alpha }}\varphi\left( \tau\right) d\tau.
\end{equation}

\section{Main results}

We enunciate and prove the following lemma in order to use it in the first theorem which generalizes 
the inequalities of Gruss-type, \textnormal{Eq.(\ref{Gruss})}. First of all, let $\alpha>0$, $x>0$ 
and $\beta,\rho,\eta,\kappa\in\mathbb{R}$. We define the following function
\begin{equation}\label{PCM}
\Lambda _{x,\kappa }^{\rho,\beta}\left( \alpha ,\eta \right) =\frac{\Gamma \left(\eta +1\right) }{\Gamma \left( \eta +\alpha+1\right) }\rho ^{-\beta}x^{\kappa +\rho \left( \eta +\alpha \right) },
\end{equation}
to simplify the development and notation.

\begin{lemma}\label{auxiliar-lemma1} Let $m,M,\beta ,\kappa \in \mathbb{R}$ and $u$ be an
integrable function on $[0,\infty )$. Then for all $x>0$, $\alpha >0$, $\rho
>0$ and $\eta \geq {0}$, we have 
\begin{eqnarray}
&&\displaystyle\Lambda _{x,\kappa }^{\rho,\beta}\left( \alpha ,\eta \right) \,{%
^{\rho }I_{\eta ,\kappa }^{\alpha ,\beta }}u^{2}(x)-({^{\rho }I_{\eta
,\kappa }^{\alpha ,\beta }}u(x))^{2}  \nonumber \\
&&\displaystyle=\left( M\Lambda _{x,\kappa }^{\rho,\beta }\left( \alpha ,\eta
\right) -{^{\rho }I_{\eta ,\kappa }^{\alpha ,\beta }}u(x)\right) \times
\left( {^{\rho }I_{\eta ,\kappa }^{\alpha ,\beta }}u(x)-m\Lambda _{x,\kappa
}^{\rho,\beta }\left( \alpha ,\eta \right) \right)   \nonumber \\
&&\displaystyle-\Lambda _{x,\kappa }^{\rho,\beta}\left( \alpha ,\eta \right) {%
^{\rho }I_{\eta ,\kappa }^{\alpha ,\beta }}(M-u(x))(u(x)-m),  \label{lema1}
\end{eqnarray}%
with $\Lambda _{x,\kappa }^{\rho,\beta}\left( \alpha ,\eta \right)$ given by {\rm Eq.(\ref{PCM})}.

\begin{proof}
Let $m,M\in \mathbb{R}$ and $u$ be an integrable function on $[0,\infty )$.
For all $\tau ,\xi \in \lbrack 0,\infty )$, we have 
\begin{eqnarray}
&&(M-u(\xi ))(u(\tau )-m)+(M-u(\tau ))(u(\xi )-m)-(M-u(\tau ))(u(\tau )-m) 
\nonumber \\
&&-(M-u(\xi ))(u(\xi )-m)=u^{2}(\tau )+u^{2}(\xi )-2u(\tau )u(\xi ).
\label{eq.auxiliar}
\end{eqnarray}%

Multiplying both sides of \textnormal{Eq.(\ref{eq.auxiliar})} by $%
\displaystyle\frac{\rho ^{1-\beta }\,x^{\kappa }}{\Gamma (\alpha )}\frac{{%
\tau ^{\rho (\eta +1)-1}}}{(x^{\rho }-\tau ^{\rho })^{1-\alpha }}$, where $%
\tau \in (0,x)$, $x>0$, and integrating with respect to the variable $%
\tau $, from 0 to $x$, we obtain 
\begin{eqnarray}
&&\displaystyle(M-u(\xi ))\left( {^{\rho }I_{\eta ,\kappa }^{\alpha ,\beta }}%
u(x)-m\Lambda _{x,\kappa }^{\rho,\beta}\left( \alpha ,\eta \right) \right)
+(u(\xi )-m)  \nonumber \\
&&\displaystyle\times \left( M\Lambda _{x,\kappa }^{\rho,\beta }\left( \alpha
,\eta \right) -{^{\rho }I_{\eta ,\kappa }^{\alpha ,\beta }}u(x)\right)
-\left( {^{\rho }I_{\eta ,\kappa }^{\alpha ,\beta }}(M-u(x))(u(x)-m)\right) 
\label{eq.lema1} \\
&&-\displaystyle(M-u(\xi ))(u(\xi )-m)\Lambda _{x,\kappa }^{\rho,\beta }\left(
\alpha ,\eta \right) ={^{\rho }I_{\eta ,\kappa }^{\alpha ,\beta }}u^{2}(x) 
\nonumber \\
&&\displaystyle+u^{2}(\xi )\Lambda _{x,\kappa }^{\rho,\beta }\left( \alpha ,\eta
\right) -2u(\xi ){^{\rho }I_{\eta ,\kappa }^{\alpha ,\beta }}u(x).  \nonumber
\end{eqnarray}%
Also, multiplying \textnormal{Eq.(\ref{eq.lema1})} by $\displaystyle\frac{%
\rho ^{1-\beta }\,x^{\kappa }}{\Gamma (\alpha )}\frac{{\xi ^{\rho (\eta
+1)-1}}}{(x^{\rho }-\xi ^{\rho })^{1-\alpha }}$, where $\xi \in (0,x)$ and $%
x>0$, and integrating with respect to the variable $\xi $, from 0 to $x$,
we get 
\begin{eqnarray}
&&\displaystyle\left( {^{\rho }I_{\eta ,\kappa }^{\alpha ,\beta }}%
u(x)-m\Lambda _{x,\kappa }^{\rho,\beta}\left( \alpha ,\eta \right) \right) \left(
M\Lambda _{x,\kappa }^{\rho,\beta}\left( \alpha ,\eta \right) -{^{\rho }I_{\eta
,\kappa }^{\alpha ,\beta }}u(x)\right)   \nonumber \\
&&\displaystyle+\left( M\Lambda _{x,\kappa }^{\rho,\beta }\left( \alpha ,\eta
\right) -{^{\rho }I_{\eta ,\kappa }^{\alpha ,\beta }}u(x)\right) \left( {%
^{\rho }I_{\eta ,\kappa }^{\alpha ,\beta }}u(x)-m\Lambda _{x,\kappa }^{\rho
,\beta}\left( \alpha ,\eta \right) \right)   \nonumber \\
&&\displaystyle-\Lambda _{x,\kappa }^{\rho,\beta}\left( \alpha ,\eta \right) \,{%
^{\rho }I_{\eta ,\kappa }^{\alpha ,\beta }}(M-u(x))(u(x)-m)
\label{eq.2-lema1} \\
&&\displaystyle-\Lambda _{x,\kappa }^{\rho,\beta}\left( \alpha ,\eta \right) \,{%
^{\rho }I_{\eta ,\kappa }^{\alpha ,\beta }}(M-u(x))(u(x)-m)  \nonumber \\
&&\displaystyle=\Lambda _{x,\kappa }^{\rho,\beta}\left( \alpha ,\eta \right) \,{%
^{\rho }I_{\eta ,\kappa }^{\alpha ,\beta }}u^{2}(x)+\Lambda _{x,\kappa
}^{\rho,\beta }\left( \alpha ,\eta \right) {^{\rho }I_{\eta ,\kappa }^{\alpha
,\beta }}u^{2}(x)  \nonumber \\
&&\displaystyle-2{^{\rho }I_{\eta ,\kappa }^{\alpha ,\beta }}u(x)\,{^{\rho
}I_{\eta ,\kappa }^{\alpha ,\beta }}u(x),  \nonumber
\end{eqnarray}
where we have introduced $\Lambda _{x,\kappa }^{\rho,\beta}\left( \alpha ,\eta \right)$ in {\rm Eq.(\ref{PCM})}.

Rearranging \textnormal{Eq.(\ref{eq.2-lema1})}, we immediately get, 
\textnormal{Eq.(\ref{lema1})}. 
\end{proof}
\end{lemma}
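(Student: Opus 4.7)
The proof rests on a purely algebraic pointwise identity followed by a double integration against the kernel of ${}^{\rho}I_{\eta,\kappa}^{\alpha,\beta}$. My plan is as follows.

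First, I would establish the elementary identity
\begin{equation*}
(M-u(\xi))(u(\tau)-m)+(M-u(\tau))(u(\xi)-m)-(M-u(\tau))(u(\tau)-m)-(M-u(\xi))(u(\xi)-m) = u^{2}(\tau)+u^{2}(\xi)-2u(\tau)u(\xi).
\end{equation*}
This is a one-line check: set $a=u(\tau)$, $b=u(\xi)$, expand both sides, and observe that all cross-terms with $M$ and $m$ cancel, leaving $a^{2}+b^{2}-2ab$. This is precisely the identity used as the starting point in Eq.~(\ref{eq.auxiliar}).

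Second, I would record the key auxiliary fact that applying the operator ${}^{\rho}I_{\eta,\kappa}^{\alpha,\beta}$ to the constant function $1$ produces $\Lambda_{x,\kappa}^{\rho,\beta}(\alpha,\eta)$. This follows from the substitution $s=(\tau/x)^{\rho}$ in the integral in Eq.~(\ref{B3}), which reduces it to a Beta function $B(\eta+1,\alpha)=\Gamma(\eta+1)\Gamma(\alpha)/\Gamma(\eta+\alpha+1)$, matching the definition of $\Lambda$ in Eq.~(\ref{PCM}). This is the bridge that lets integrals of products with $u(\xi)$ treated as a constant (in $\tau$) factor out cleanly.

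Third, I would multiply the pointwise identity by the kernel $\frac{\rho^{1-\beta}x^{\kappa}}{\Gamma(\alpha)}\frac{\tau^{\rho(\eta+1)-1}}{(x^{\rho}-\tau^{\rho})^{1-\alpha}}$ and integrate $\tau$ over $(0,x)$. Using linearity, the terms involving only $u(\xi)$ pull out of the integral and pick up a factor of $\Lambda_{x,\kappa}^{\rho,\beta}(\alpha,\eta)$ by the previous step; the terms involving $u(\tau)$ alone become ${}^{\rho}I_{\eta,\kappa}^{\alpha,\beta}u(x)$; and the mixed term $(M-u(\tau))(u(\tau)-m)$ becomes ${}^{\rho}I_{\eta,\kappa}^{\alpha,\beta}(M-u(x))(u(x)-m)$. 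The result is exactly Eq.~(\ref{eq.lema1}), an identity in the single variable $\xi$.

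Finally, I would multiply Eq.~(\ref{eq.lema1}) by the analogous kernel in $\xi$ and integrate $\xi$ over $(0,x)$. The same bookkeeping applies: pure constants acquire a factor $\Lambda$, linear terms in $u(\xi)$ produce ${}^{\rho}I_{\eta,\kappa}^{\alpha,\beta}u(x)$, and the quadratic $u^{2}(\xi)$ and mixed $(M-u(\xi))(u(\xi)-m)$ terms produce the corresponding fractional integrals multiplied by $\Lambda$. The term $(M-u(x))(u(x)-m)$ appears with a factor $\Lambda$ from both passes, which accounts for its appearing twice in Eq.~(\ref{eq.2-lema1}). After collecting like terms one cancels a duplicated $\Lambda\,{}^{\rho}I_{\eta,\kappa}^{\alpha,\beta}u^{2}(x)$ and one duplicated product of fractional integrals, and obtains Eq.~(\ref{lema1}) upon rearrangement. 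The main obstacle is not conceptual but bookkeeping: carefully tracking which of the four pointwise summands contributes $\Lambda$ factors on one or both passes, so that the final count on the $(M-u)(u-m)$ term matches the statement.
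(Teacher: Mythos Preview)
Your proposal is correct and follows essentially the same approach as the paper: establish the pointwise algebraic identity, integrate against the kernel in $\tau$ to obtain an identity in $\xi$, then integrate again in $\xi$ and rearrange. Your explicit verification that ${}^{\rho}I_{\eta,\kappa}^{\alpha,\beta}[1](x)=\Lambda_{x,\kappa}^{\rho,\beta}(\alpha,\eta)$ via the Beta function is a helpful detail that the paper uses implicitly.
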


Note that, when $\eta=0,$ $\kappa=0$ and $\rho\rightarrow1$ in \textnormal{Eq.(\ref{lema1})}, we have
\begin{eqnarray*}
&&\left( M\frac{x^{\alpha }}{\Gamma (\alpha +1)}-I^{\alpha }u(x)\right)
\left( I^{\alpha }u(x)-m\frac{x^{\alpha }}{\Gamma (\alpha +1)}\right)  \\
&&-\left( \frac{x^{\alpha }}{\Gamma (\alpha +1)}-I^{\alpha
}(M-u(x))(u(x)-m)\right)  \\
&=&\frac{x^{\alpha }}{\Gamma (\alpha +1)}-I^{\alpha }u^{2}(x)-(I^{\alpha
}u(x))^{2},
\end{eqnarray*}
where
\begin{equation*}
\underset{\rho \rightarrow 1}{\lim }\Lambda _{x,0}^{\rho,\beta }\left( \alpha ,0\right) =\Lambda _{x}\left( \alpha ,0\right) =\frac{x^{\alpha }}{\Gamma \left( \alpha +1\right) }.
\end{equation*}

This result, was obtained in \textnormal{\cite{dahmani2010new}}, considering the 
Riemann-Liouville fractional integral.

\begin{theorem}
\label{Gruss-T1} Let $f$ and $g$ be two integrable functions on $[0,\infty )$%
, satisfying the condition  
\begin{equation}\label{condicoes}
m\leq {f(x)}\leq {M},\quad p\leq {g(x)}\leq {P};\quad \mbox{where}\quad
m,M,p,P\in \mathbb{R}\quad \mbox{and}\quad x\in \lbrack 0,\infty ).
\end{equation}
Then, for all $\beta ,\kappa \in \mathbb{R}$, $x>0$, $\alpha >0$, $\rho >0$
and $\eta \geq {0}$, we have 
\begin{equation}
\biggl|\Lambda _{x,\kappa }^{\rho,\beta}\left( \alpha ,\eta \right) \,{^{\rho
}I_{\eta ,\kappa }^{\alpha ,\beta }}fg(x)-{^{\rho }I_{\eta ,\kappa }^{\alpha
,\beta }}f(x)\,{^{\rho }I_{\eta ,\kappa }^{\alpha ,\beta }}g(x)\biggl|\leq {%
\left( \Lambda _{x,\kappa }^{\rho,\beta}\left( \alpha ,\eta \right) \right) ^{2}}%
(M-m)(P-p).  \label{teorema1}
\end{equation}%

\begin{proof}
Let $f$ and $g$ be two integrable functions satisfying the condition 
\textnormal{Eq.(\ref{condicoes})}. Consider 
\begin{equation*}
H(\tau ,\xi )=(f(\tau )-f(\xi ))(g(\tau )-g(\xi ));\quad \tau ,\xi \in
(0,x)\quad \mbox{and}\quad x>0,
\end{equation*}
or, in the following form, 
\begin{equation}\label{def}
H(\tau ,\xi )=f(\tau )g(\tau )-f(\tau )g(\xi )-f(\xi )g(\tau )+f(\xi )g(\xi).
\end{equation}
Multiplying both sides of \textnormal{Eq.(\ref{def})} by $\displaystyle\frac{%
\rho ^{1-\beta }\,x^{\kappa }}{\Gamma (\alpha )}\frac{{\tau ^{\rho (\eta
+1)-1}}}{(x^{\rho }-\tau ^{\rho })^{1-\alpha }}$, where $\tau \in (0,x)$, $%
x>0$, and integrating with respect to the variable $\tau $, from 0 to $x$,
we obtain 
\begin{eqnarray}
&&\frac{\rho ^{\beta -1}x^{\kappa }}{\Gamma (\alpha )}\int_{0}^{x}\frac{{%
\tau ^{\rho (\eta +1)-1}}}{(x^{\rho }-\tau ^{\rho })^{1-\alpha }}H(\tau ,\xi
)d\tau ={^{\rho }I_{\eta ,\kappa }^{\alpha ,\beta }}fg(x)-g(\xi )\,{^{\rho
}I_{\eta ,\kappa }^{\alpha ,\beta }}f(x)  \notag \\
&&-f(\xi )\,{^{\rho }I_{\eta ,\kappa }^{\alpha ,\beta }}g(x)+\Lambda
_{x,\kappa }^{\rho,\beta }\left( \alpha ,\eta \right) f(\xi )g(\xi ).
\label{eq.teorema1}
\end{eqnarray}%
Multiplying \textnormal{Eq.(\ref{eq.teorema1})} by $\displaystyle\frac{%
\rho ^{1-\beta }\,x^{\kappa }}{\Gamma (\alpha )}\frac{{\xi ^{\rho (\eta
+1)-1}}}{(x^{\rho }-\xi ^{\rho })^{1-\alpha }}$, where $\xi \in (0,x)$, $x>0$%
, and integrating with respect to the variable $\xi $, from 0 to $x$, we
have 
\begin{eqnarray*}
&&\frac{\rho ^{2(1-\beta )}x^{2\kappa }}{\Gamma ^{2}(\alpha )}%
\int_{0}^{x}\int_{0}^{x}\frac{{\tau ^{\rho (\eta +1)-1}}}{(x^{\rho }-\tau
^{\rho })^{1-\alpha }}\frac{{\xi ^{\rho (\eta +1)-1}}}{(x^{\rho }-\xi ^{\rho
})^{1-\alpha }}H(\tau ,\xi )d\tau d\xi  \\
&=&2\left( \Lambda _{x,\kappa }^{\rho,\beta}\left( \alpha ,\eta \right) \,{^{\rho
}I_{\eta ,\kappa }^{\alpha ,\beta }}fg(x)-{^{\rho }I_{\eta ,\kappa }^{\alpha
,\beta }}f(x)\,{^{\rho }I_{\eta ,\kappa }^{\alpha ,\beta }}g(x)\right) .
\end{eqnarray*}%
Applying the Cauchy-Schwarz inequality associated with double integrals, 
\textnormal{\cite{steele}}, we can write 
\begin{eqnarray}
&&\displaystyle\biggl|\Lambda _{x,\kappa }^{\rho,\beta }\left( \alpha ,\eta
\right) {^{\rho }I_{\eta ,\kappa }^{\alpha ,\beta }}fg(x)-{^{\rho }I_{\eta
,\kappa }^{\alpha ,\beta }}f(x)\,{^{\rho }I_{\eta ,\kappa }^{\alpha ,\beta }}%
g(x)\biggl|^{2}  \notag \\
&&\displaystyle\leq \left( \Lambda _{x,\kappa }^{\rho,\beta }\left( \alpha ,\eta
\right) \,{^{\rho }I_{\eta ,\kappa }^{\alpha ,\beta }}f^{2}(x)-\left( {%
^{\rho }I_{\eta ,\kappa }^{\alpha ,\beta }}f(x)\right) ^{2}\right) 
\label{cauchy-schwarz} \\
&&\displaystyle\times \left( \Lambda _{x,\kappa }^{\rho,\beta }\left( \alpha ,\eta
\right) \,{^{\rho }I_{\eta ,\kappa }^{\alpha ,\beta }}g^{2}(x)-\left( {%
^{\rho }I_{\eta ,\kappa }^{\alpha ,\beta }}g(x)\right) ^{2}\right) .  \notag
\end{eqnarray}%
Since $(M-f(x))(f(x)-m)\geq {0}$ and $(P-g(x))(g(x)-P)\geq {0}$, we have 
\begin{equation}
\Lambda _{x,\kappa }^{\rho,\beta}\left( \alpha ,\eta \right) \,{^{\rho }I_{\eta
,\kappa }^{\alpha ,\beta }}(M-f(x))(f(x)-m)\geq {0}
\end{equation}%
and 
\begin{equation}
\Lambda _{x,\kappa }^{\rho,\beta}\left( \alpha ,\eta \right) \,{^{\rho }I_{\eta
,\kappa }^{\alpha ,\beta }}(P-g(x))(g(x)-P)\geq {0}.
\end{equation}%
Thus, 
\begin{eqnarray}
&&\displaystyle\left( \Lambda _{x,\kappa }^{\rho,\beta }\left( \alpha ,\eta
\right) \,{^{\rho }I_{\eta ,\kappa }^{\alpha ,\beta }}f^{2}(x)-\left( {%
^{\rho }I_{\eta ,\kappa }^{\alpha ,\beta }}f(x)\right) ^{2}\right) %
\displaystyle\leq \left( M\Lambda _{x,\kappa }^{\rho,\beta }\left( \alpha ,\eta
\right) -{^{\rho }I_{\eta ,\kappa }^{\alpha ,\beta }}f(x)\right)   \notag
\label{eq.f} \\
&&\displaystyle\times \left( {^{\rho }I_{\eta ,\kappa }^{\alpha ,\beta }}%
f(x)-m\Lambda _{x,\kappa }^{\rho,\beta}\left( \alpha ,\eta \right) \right) 
\end{eqnarray}
and 
\begin{eqnarray}
&&\displaystyle\left( \Lambda _{x,\kappa }^{\rho,\beta }\left( \alpha ,\eta
\right) \,{^{\rho }I_{\eta ,\kappa }^{\alpha ,\beta }}g^{2}(x)-\left( {%
^{\rho }I_{\eta ,\kappa }^{\alpha ,\beta }}g(x)\right) ^{2}\right) %
\displaystyle\leq \left( P\Lambda _{x,\kappa }^{\rho,\beta }\left( \alpha ,\eta
\right) -{^{\rho }I_{\eta ,\kappa }^{\alpha ,\beta }}g(x)\right)   \notag
\label{eq.g} \\
&&\displaystyle\times \left( {^{\rho }I_{\eta ,\kappa }^{\alpha ,\beta }}%
g(x)-p\Lambda _{x,\kappa }^{\rho,\beta}\left( \alpha ,\eta \right) \right) .
\end{eqnarray}
Combining \textnormal{Eq.(\ref{cauchy-schwarz})}, \textnormal{Eq.(\ref{eq.f})%
} and \textnormal{Eq.(\ref{eq.g})}, using \textnormal{Lema \ref%
{auxiliar-lemma1}}, we conclude that 
\begin{eqnarray}
&&\displaystyle\left( \Lambda _{x,\kappa }^{\rho,\beta }\left( \alpha ,\eta
\right) \,{^{\rho }I_{\eta ,\kappa }^{\alpha ,\beta }}fg(x)-{^{\rho }I_{\eta
,\kappa }^{\alpha ,\beta }}f(x)\,{^{\rho }I_{\eta ,\kappa }^{\alpha ,\beta }}%
g(x)\right) ^{2}  \label{combinacao} \\
&&\displaystyle\leq \left( M\Lambda _{x,\kappa }^{\rho,\beta }\left( \alpha ,\eta
\right) -{^{\rho }I_{\eta ,\kappa }^{\alpha ,\beta }}f(x)\right) \left( {%
^{\rho }I_{\eta ,\kappa }^{\alpha ,\beta }}f(x)-m\Lambda _{x,\kappa }^{\rho
,\beta}\left( \alpha ,\eta \right) \right)   \notag \\
&&\displaystyle\times \left( P\Lambda _{x,\kappa }^{\rho,\beta }\left( \alpha
,\eta \right) -{^{\rho }I_{\eta ,\kappa }^{\alpha ,\beta }}g(x)\right)
\left( {^{\rho }I_{\eta ,\kappa }^{\alpha ,\beta }}g(x)-p\Lambda _{x,\kappa
}^{\rho,\beta }\left( \alpha ,\eta \right) \right) .  \notag
\end{eqnarray}%
Further, using the inequality $4ab\leq {(a+b)^{2}}$, $a,b\in \mathbb{R}$, we
have 
\begin{eqnarray}
&&\displaystyle4\left( M\Lambda _{x,\kappa }^{\rho,\beta }\left( \alpha ,\eta
\right) -{^{\rho }I_{\eta ,\kappa }^{\alpha ,\beta }}f(x)\right) \left( {%
^{\rho }I_{\eta ,\kappa }^{\alpha ,\beta }}f(x)-m\Lambda _{x,\kappa }^{\rho
,\beta}\left( \alpha ,\eta \right) \right)   \notag \\
&&\displaystyle\leq \left( \Lambda _{x,\kappa }^{\rho,\beta }\left( \alpha ,\eta
\right) (M-m)^{2}\right) ,  \label{des.m}
\end{eqnarray}%
and 
\begin{eqnarray}
&&\displaystyle4\left( P\Lambda _{x,\kappa }^{\rho,\beta }\left( \alpha ,\eta
\right) -{^{\rho }I_{\eta ,\kappa }^{\alpha ,\beta }}g(x)\right) \left( {%
^{\rho }I_{\eta ,\kappa }^{\alpha ,\beta }}g(x)-p\Lambda _{x,\kappa }^{\rho
,\beta}\left( \alpha ,\eta \right) \right)   \notag \\
&&\displaystyle\leq \left(\Lambda _{x,\kappa }^{\rho,\beta }\left( \alpha ,\eta
\right) (P-p)^{2}\right) .  \label{des.p}
\end{eqnarray}%
Finally, from \textnormal{Eq.(\ref{combinacao})}, \textnormal{Eq.(\ref{des.m}%
)} and \textnormal{Eq.(\ref{des.p})}, we obtain \textnormal{Eq.(\ref%
{teorema1})}.
\end{proof}
\end{theorem}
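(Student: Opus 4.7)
The plan is to follow the classical Korkine--Gr\"uss strategy, but with the Katugampola kernel replacing ordinary Lebesgue integration, and to lean on Lemma \ref{auxiliar-lemma1} for the variance-type bounds. First I would introduce the auxiliary function $H(\tau,\xi)=(f(\tau)-f(\xi))(g(\tau)-g(\xi))$ and expand it as $f(\tau)g(\tau)-f(\tau)g(\xi)-f(\xi)g(\tau)+f(\xi)g(\xi)$. Multiplying by the Katugampola kernel $\displaystyle\frac{\rho^{1-\beta}x^{\kappa}}{\Gamma(\alpha)}\,\frac{\tau^{\rho(\eta+1)-1}}{(x^{\rho}-\tau^{\rho})^{1-\alpha}}$ and integrating in $\tau$ over $(0,x)$ converts the first three terms into ${}^{\rho}I^{\alpha,\beta}_{\eta,\kappa}(fg)(x)$, $-g(\xi)\,{}^{\rho}I^{\alpha,\beta}_{\eta,\kappa}f(x)$ and $-f(\xi)\,{}^{\rho}I^{\alpha,\beta}_{\eta,\kappa}g(x)$, while the last picks up a factor $\Lambda^{\rho,\beta}_{x,\kappa}(\alpha,\eta)$ since the integral of the kernel alone is exactly this $\Lambda$ (this is the computation behind Eq.~(\ref{PCM})).

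Repeating the operation in the variable $\xi$ would yield a Korkine-type identity of the form
\begin{equation*}
\frac{\rho^{2(1-\beta)}x^{2\kappa}}{\Gamma^{2}(\alpha)}\int_{0}^{x}\!\!\int_{0}^{x}K(\tau)K(\xi)H(\tau,\xi)\,d\tau\,d\xi
= 2\bigl[\Lambda^{\rho,\beta}_{x,\kappa}(\alpha,\eta)\,{}^{\rho}I^{\alpha,\beta}_{\eta,\kappa}(fg)(x)-{}^{\rho}I^{\alpha,\beta}_{\eta,\kappa}f(x)\,{}^{\rho}I^{\alpha,\beta}_{\eta,\kappa}g(x)\bigr],
\end{equation*}
where $K$ denotes the Katugampola kernel. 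The Cauchy--Schwarz inequality applied to this nonnegative product measure then gives the square of the left-hand side of (\ref{teorema1}) as being bounded by the product of two ``variance'' quantities
$\Lambda^{\rho,\beta}_{x,\kappa}(\alpha,\eta)\,{}^{\rho}I^{\alpha,\beta}_{\eta,\kappa}f^{2}(x)-({}^{\rho}I^{\alpha,\beta}_{\eta,\kappa}f(x))^{2}$ and the analogous expression for $g$.

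Next I would invoke Lemma \ref{auxiliar-lemma1} with $u=f$ and $u=g$ respectively. Because $m\le f\le M$ and $p\le g\le P$, the products $(M-f)(f-m)$ and $(P-g)(g-p)$ are pointwise nonnegative, so their fractional integrals are nonnegative as well. Substituting into the identity (\ref{lema1}) shows that each variance quantity is dominated by a single product:
\begin{equation*}
\Lambda^{\rho,\beta}_{x,\kappa}(\alpha,\eta)\,{}^{\rho}I^{\alpha,\beta}_{\eta,\kappa}f^{2}(x)-\bigl({}^{\rho}I^{\alpha,\beta}_{\eta,\kappa}f(x)\bigr)^{2}\le \bigl(M\Lambda-{}^{\rho}I^{\alpha,\beta}_{\eta,\kappa}f(x)\bigr)\bigl({}^{\rho}I^{\alpha,\beta}_{\eta,\kappa}f(x)-m\Lambda\bigr),
\end{equation*}
and similarly for $g$ with $p,P$ in place of $m,M$.

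Finally I would close the argument with the elementary bound $4ab\le(a+b)^{2}$: applied to $a=M\Lambda-{}^{\rho}I f$, $b={}^{\rho}If-m\Lambda$ it gives $4ab\le(M-m)^{2}\Lambda^{2}$, and analogously $(P-p)^{2}\Lambda^{2}$ for the $g$-factor. Taking square roots in the Cauchy--Schwarz estimate and combining produces exactly the right-hand side of (\ref{teorema1}). The main bookkeeping obstacle I anticipate is keeping track of all the $\Lambda^{\rho,\beta}_{x,\kappa}(\alpha,\eta)$ factors through the double-integration step and the Cauchy--Schwarz application, since each of them carries a hidden dependence on the kernel normalization $\rho^{1-\beta}x^{\kappa}/\Gamma(\alpha)$; the rest of the argument is structurally identical to the classical Gr\"uss proof.
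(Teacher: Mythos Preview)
Your proposal is correct and follows essentially the same route as the paper's proof: introduce $H(\tau,\xi)=(f(\tau)-f(\xi))(g(\tau)-g(\xi))$, integrate twice against the Katugampola kernel to obtain the Korkine identity, apply Cauchy--Schwarz to reach the product of the two ``variance'' terms, bound each of these via Lemma~\ref{auxiliar-lemma1} together with the nonnegativity of $(M-f)(f-m)$ and $(P-g)(g-p)$, and finish with $4ab\le(a+b)^{2}$. One small remark: the last step actually yields the sharper bound $\tfrac{1}{4}\bigl(\Lambda^{\rho,\beta}_{x,\kappa}(\alpha,\eta)\bigr)^{2}(M-m)(P-p)$, so ``exactly the right-hand side'' is a slight overstatement, but of course the stated inequality (\ref{teorema1}) follows a fortiori.
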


Applying \textnormal{Theorem \ref{Gruss-T1}} for $\alpha=1$, $\rho\rightarrow{1}$, $\eta={0}$ 
and $\kappa=0$, we obtain the classical inequality of Gruss-type, \textnormal{Eq.(\ref{Gruss})}. 
On the other hand, for $\rho\rightarrow{1}$, $\eta={0}$ and $\kappa=0$, we recover the 
\textnormal{Theorem 3.1} in \textnormal{\cite{dahmani2010new}}.\\
To prove the next theorem, which generalizes the inequalities of Gruss-type, we need the following lemmas,
in which we have introduced the notation as in Eq.(\ref{PCM}) with $\alpha=\gamma$, i.e.,
$\Lambda _{x,\kappa }^{\rho,\beta }\left( \gamma ,\eta \right) $.

\begin{lemma}
\label{auxiliar-lemma2} Let $f$ and $g$ be two integrable funtions on $%
[0,\infty )$. Then for all $\beta ,\kappa \in \mathbb{R}$, $x>0$, $\alpha >0$%
, $\rho >0$, $\eta \geq {0}$ and $\gamma >0$, we have 
\begin{eqnarray}
&&\displaystyle\left( \Lambda _{x,\kappa }^{\rho,\beta }\left( \alpha ,\eta
\right) \,{^{\rho }I_{\eta ,\kappa }^{\gamma ,\beta }}fg(x)+\Lambda
_{x,\kappa }^{\rho,\beta }\left( \gamma ,\eta \right) \,{^{\rho }I_{\eta ,\kappa
}^{\alpha ,\beta }}fg(x)\right.   \notag \\
&&\displaystyle\left. {^{\rho }I_{\eta ,\kappa }^{\alpha ,\beta }}f(x)\,{%
^{\rho }I_{\eta ,\kappa }^{\gamma ,\beta }}g(x)-{^{\rho }I_{\eta ,\kappa
}^{\gamma ,\beta }}f(x)\,{^{\rho }I_{\eta ,\kappa }^{\alpha ,\beta }}%
g(x)\right) ^{2}  \notag \\
&&\displaystyle\leq \left( \Lambda _{x,\kappa }^{\rho,\beta }\left( \alpha ,\eta
\right) \,{^{\rho }I_{\eta ,\kappa }^{\gamma ,\beta }}f^{2}(x)+\Lambda
_{x,\kappa }^{\rho,\beta }\left( \gamma ,\eta \right) \,{^{\rho }I_{\eta ,\kappa
}^{\alpha ,\beta }}f^{2}(x)\right.   \notag \\
&&\displaystyle\left. -2\,{^{\rho }I_{\eta ,\kappa }^{\alpha ,\beta }}f(x)\,{%
^{\rho }I_{\eta ,\kappa }^{\gamma ,\beta }}f(x)\right) \left( \Lambda
_{x,\kappa }^{\rho,\beta }\left( \alpha ,\eta \right) \,{^{\rho }I_{\eta ,\kappa
}^{\gamma ,\beta }}g^{2}(x)\right.   \notag \\
&&\displaystyle\left. +\Lambda _{x,\kappa }^{\rho,\beta }\left( \gamma ,\eta
\right) \,{^{\rho }I_{\eta ,\kappa }^{\alpha ,\beta }}g^{2}(x)-2\,{^{\rho
}I_{\eta ,\kappa }^{\alpha ,\beta }}g(x)\,{^{\rho }I_{\eta ,\kappa }^{\gamma
,\beta }}g(x)\right) . \label{Eq:star}
\end{eqnarray}%

\begin{proof}
Multiplying both sides of \textnormal{Eq.(\ref{eq.teorema1})} by $%
\displaystyle\frac{\rho ^{1-\beta }\,x^{\kappa }}{\Gamma (\gamma )}\frac{{%
\xi ^{\rho (\eta +1)-1}}}{(x^{\rho }-\xi ^{\rho })^{1-\gamma }}$, where $\xi
\in (0,x)$, $x>0$, and integrating with respect to the variable $\xi $,
from 0 to $x$, we obtain 
\begin{eqnarray*}
&&\frac{\rho ^{2(1-\beta )}x^{2\kappa }}{\Gamma (\alpha )\Gamma (\gamma )}%
\int_{0}^{x}\int_{0}^{x}\frac{{\tau ^{\rho (\eta +1)-1}}}{(x^{\rho }-\tau
^{\rho })^{1-\alpha }}\frac{{\xi ^{\rho (\eta +1)-1}}}{(x^{\rho }-\xi ^{\rho
})^{1-\gamma }}H(\tau ,\xi )d\tau d\xi  \\
&=&\Lambda _{x,\kappa }^{\rho,\beta }\left( \gamma ,\eta \right) \,{^{\rho
}I_{\eta ,\kappa }^{\alpha ,\beta }}fg(x)+\Lambda _{x,\kappa }^{\rho,\beta }\left(
\alpha ,\eta \right) \,{^{\rho }I_{\eta ,\kappa }^{\gamma ,\beta }}fg(x) \\
&&-{^{\rho }I_{\eta ,\kappa }^{\alpha ,\beta }}f(x)\,{^{\rho }I_{\eta
,\kappa }^{\gamma ,\beta }}f(x)-{^{\rho }I_{\eta ,\kappa }^{\alpha ,\beta }}%
g(x)\,{^{\rho }I_{\eta ,\kappa }^{\gamma ,\beta }}g(x).
\end{eqnarray*}%
Using the Cauchy-Schwarz inequality associated with double integrals, we
have 
\begin{eqnarray*}
&&\biggl|\Lambda _{x,\kappa }^{\rho,\beta }\left( \gamma ,\eta \right) \,{^{\rho
}I_{\eta ,\kappa }^{\alpha ,\beta }}fg(x)+\Lambda _{x,\kappa }^{\rho,\beta }\left(
\alpha ,\eta \right) \,{^{\rho }I_{\eta ,\kappa }^{\gamma ,\beta }}fg(x) \\
&&-\,{^{\rho }I_{\eta ,\kappa }^{\alpha ,\beta }}f(x)\,{^{\rho }I_{\eta
,\kappa }^{\gamma ,\beta }}f(x)-{^{\rho }I_{\eta ,\kappa }^{\alpha ,\beta }}%
g(x)\,{^{\rho }I_{\eta ,\kappa }^{\gamma ,\beta }}g(x)\biggl|^{2} \\
&\leq &\left( \Lambda _{x,\kappa }^{\rho,\beta }\left( \gamma ,\eta \right) {%
^{\rho }I_{\eta ,\kappa }^{\alpha ,\beta }}f^{2}(x)+\Lambda _{x,\kappa
}^{\rho,\beta }\left( \alpha ,\eta \right) \,{^{\rho }I_{\eta ,\kappa }^{\gamma
,\beta }}f^{2}(x)\right.  \\
&&\left. -2\,{^{\rho }I_{\eta ,\kappa }^{\alpha ,\beta }}f(x)\,{^{\rho
}I_{\eta ,\kappa }^{\gamma ,\beta }}f(x)\right) \left( \Lambda _{x,\kappa
}^{\rho,\beta }\left( \gamma ,\eta \right) \,{^{\rho }I_{\eta ,\kappa }^{\alpha
,\beta }}g^{2}(x)\right.  \\
&&\left. \Lambda _{x,\kappa }^{\rho,\beta}\left( \alpha ,\eta \right) \,{^{\rho
}I_{\eta ,\kappa }^{\gamma ,\beta }}g^{2}(x)-2\,{^{\rho }I_{\eta ,\kappa
}^{\alpha ,\beta }}g(x)\,{^{\rho }I_{\eta ,\kappa }^{\gamma ,\beta }}%
g(x)\right),
\end{eqnarray*}
which is \textnormal{Eq.(\ref{Eq:star})}.
\end{proof}
\end{lemma}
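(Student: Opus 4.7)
The plan is to replicate the strategy of the proof of Theorem \ref{Gruss-T1}, namely to exploit the symmetric auxiliary function $H(\tau,\xi)=(f(\tau)-f(\xi))(g(\tau)-g(\xi))$ together with the Cauchy--Schwarz inequality for double integrals, but this time using two different fractional orders $\alpha$ and $\gamma$ in the two integrations, rather than the same order twice.

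First I would start from Equation (\ref{eq.teorema1}), already proved in Theorem \ref{Gruss-T1}, which expresses the single $\tau$-integral of $H(\tau,\xi)$ against the $\alpha$-kernel in terms of ${^{\rho }I_{\eta ,\kappa }^{\alpha ,\beta }}$ and $\Lambda_{x,\kappa}^{\rho,\beta}(\alpha,\eta)$. I would then multiply both sides of that identity by the second kernel
$$\frac{\rho^{1-\beta}\,x^{\kappa}}{\Gamma(\gamma)}\frac{\xi^{\rho(\eta+1)-1}}{(x^{\rho}-\xi^{\rho})^{1-\gamma}},$$
now with order $\gamma$ in place of $\alpha$, and integrate with respect to $\xi$ from $0$ to $x$. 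Identifying each resulting one- and two-variable integral with the appropriate operator or $\Lambda$-prefactor, the right-hand side will organize itself into the quantity $\Lambda_{x,\kappa}^{\rho,\beta}(\alpha,\eta)\,{^{\rho}I_{\eta,\kappa}^{\gamma,\beta}}fg(x)+\Lambda_{x,\kappa}^{\rho,\beta}(\gamma,\eta)\,{^{\rho}I_{\eta,\kappa}^{\alpha,\beta}}fg(x)-{^{\rho}I_{\eta,\kappa}^{\alpha,\beta}}f(x)\,{^{\rho}I_{\eta,\kappa}^{\gamma,\beta}}g(x)-{^{\rho}I_{\eta,\kappa}^{\gamma,\beta}}f(x)\,{^{\rho}I_{\eta,\kappa}^{\alpha,\beta}}g(x)$, giving an integral representation of the left-hand side of (\ref{Eq:star}) as the weighted double integral of $H(\tau,\xi)$ with the nonnegative product kernel.

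Next, I would square that identity and apply Cauchy--Schwarz to the double integral, splitting $H$ as the product of the $f$-difference and the $g$-difference against the (nonnegative) product kernel as weight. This yields an upper bound consisting of the product of two factors of the form $\iint (f(\tau)-f(\xi))^{2}K(\tau,\xi)\,d\tau d\xi$ and its $g$-analog. Expanding $(f(\tau)-f(\xi))^{2}=f^{2}(\tau)+f^{2}(\xi)-2f(\tau)f(\xi)$ and integrating term by term with the mixed kernel produces exactly $\Lambda_{x,\kappa}^{\rho,\beta}(\alpha,\eta)\,{^{\rho}I_{\eta,\kappa}^{\gamma,\beta}}f^{2}(x)+\Lambda_{x,\kappa}^{\rho,\beta}(\gamma,\eta)\,{^{\rho}I_{\eta,\kappa}^{\alpha,\beta}}f^{2}(x)-2\,{^{\rho}I_{\eta,\kappa}^{\alpha,\beta}}f(x)\,{^{\rho}I_{\eta,\kappa}^{\gamma,\beta}}f(x)$, and the analogous expression for $g$, which are precisely the two factors on the right-hand side of (\ref{Eq:star}).

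The main difficulty is not conceptual but rather careful bookkeeping of indices: since the two integrations now use different orders, the cross terms are no longer symmetric in $\alpha$ and $\gamma$, and one must verify that the prefactors $\Lambda_{x,\kappa}^{\rho,\beta}(\alpha,\eta)$ and $\Lambda_{x,\kappa}^{\rho,\beta}(\gamma,\eta)$ attach to the correct operators after each integration (the one with order $\gamma$ multiplies operators of order $\alpha$, and vice versa, because integrating a constant against the $\gamma$-kernel produces $\Lambda(\gamma,\eta)$). Once this matching is done correctly, the desired inequality is an immediate consequence of Cauchy--Schwarz.
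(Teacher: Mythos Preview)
Your proposal is correct and follows essentially the same route as the paper: multiply \textnormal{Eq.(\ref{eq.teorema1})} by the $\gamma$-kernel and integrate in $\xi$ to express the left-hand side of (\ref{Eq:star}) as the weighted double integral of $H(\tau,\xi)$, then apply the Cauchy--Schwarz inequality for double integrals and expand the squared differences. Your explicit remark about which $\Lambda$-prefactor attaches to which operator is a useful clarification, but the underlying argument is identical to the paper's.
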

\begin{lemma}
\label{auxiliar-lemma3} Let $u$ be an integrable function on $[0,\infty )$
satisfying \textnormal{Eq.(\ref{condicoes})} on $[0,\infty )$. Then for all $%
\beta ,\kappa \in \mathbb{R}$, $x>0$, $\alpha >0$, $\rho >0$, $\eta \geq {0}$
and $\gamma >0$, we have 
\begin{eqnarray}\label{lema3}
&&\Lambda _{x,\kappa }^{\rho,\beta}\left( \alpha ,\eta \right) \,{^{\rho }I_{\eta
,\kappa }^{\gamma ,\beta }}u^{2}(x)+\Lambda _{x,\kappa }^{\rho,\beta }\left(
\gamma ,\eta \right) \,{^{\rho }I_{\eta ,\kappa }^{\alpha ,\beta }}u^{2}(x) 
\notag   \\
&&-2\,{^{\rho }I_{\eta ,\kappa }^{\alpha ,\beta }}u(x)\,{^{\rho }I_{\eta
,\kappa }^{\gamma ,\beta }}u(x)=\left( M\Lambda _{x,\kappa }^{\rho,\beta }\left(
\alpha ,\eta \right) -{^{\rho }I_{\eta ,\kappa }^{\alpha ,\beta }}%
u(x)\right)   \notag \\
&&\times \left( {^{\rho }I_{\eta ,\kappa }^{\gamma ,\beta }}u(x)-m\Lambda
_{x,\kappa }^{\rho,\beta }\left( \gamma ,\eta \right) \right) +\left( M\Lambda
_{x,\kappa }^{\rho,\beta }\left( \gamma ,\eta \right) -{^{\rho }I_{\eta ,\kappa
}^{\gamma ,\beta }}u(x)\right)   \notag \\
&&\times \left( {^{\rho }I_{\eta ,\kappa }^{\alpha ,\beta }}u(x)-m\Lambda
_{x,\kappa }^{\rho,\beta }\left( \alpha ,\eta \right) \right) -\Lambda _{x,\kappa
}^{\rho,\beta }\left( \alpha ,\eta \right) {^{\rho }I_{\eta ,\kappa }^{\gamma
,\beta }}(M-u(x))(u(x)-m)  \notag \\
&&-\Lambda _{x,\kappa }^{\rho,\beta }\left( \gamma ,\eta \right) \,{^{\rho
}I_{\eta ,\kappa }^{\alpha ,\beta }}(M-u(x))(u(x)-m).
\end{eqnarray}

\begin{proof}
Multiplying both sides of \textnormal{Eq.(\ref{eq.lema1})} by $\displaystyle%
\frac{\rho ^{1-\beta }\,x^{\kappa }}{\Gamma (\gamma )}\frac{{\xi ^{\rho
(\eta +1)-1}}}{(x^{\rho }-\xi ^{\rho })^{1-\gamma }}$, where $\xi \in (0,x)$%
, $x>0$, and integrating with respect to variable $\xi $, from 0 to $x$, we
obtain 
\begin{eqnarray*}
&&\left( {^{\rho }I_{\eta ,\kappa }^{\alpha ,\beta }}u(x)-m\Lambda
_{x,\kappa }^{\rho,\beta }\left( \alpha ,\eta \right) \right) \left( \frac{\rho
^{1-\beta }x^{\kappa }}{\Gamma (\gamma )}\int_{0}^{x}\frac{{\xi ^{\rho (\eta
+1)-1}}}{(x^{\rho }-\xi ^{\rho })^{1-\gamma }}(M-u(\xi ))d\xi \right)  \\
&&+\left( M\Lambda _{x,\kappa }^{\rho,\beta}\left( \alpha ,\eta \right) -{^{\rho
}I_{\eta ,\kappa }^{\alpha ,\beta }}u(x)\right) \left( \frac{\rho ^{1-\beta
}\,x^{\kappa }}{\Gamma (\gamma )}\int_{0}^{x}\frac{{\xi ^{\rho (\eta +1)-1}}%
}{(x^{\rho }-\xi ^{\rho })^{1-\gamma }}(u(\xi )-m)d\xi \right)  \\
&&-\left( {^{\rho }I_{\eta ,\kappa }^{\alpha ,\beta }}(M-u(x))(u(x)-m)\,%
\frac{\rho ^{1-\beta }\,x^{\kappa }}{\Gamma (\gamma )}\int_{0}^{x}\frac{{\xi
^{\rho (\eta +1)-1}}}{(x^{\rho }-\xi ^{\rho })^{1-\gamma }}d\xi \right)  \\
&&-\Lambda _{x,\kappa }^{\rho,\beta}\left( \alpha ,\eta \right) \,\frac{\rho
^{1-\beta }\,x^{\kappa }}{\Gamma (\gamma )}\int_{0}^{x}\frac{{\xi ^{\rho
(\eta +1)-1}}}{(x^{\rho }-\xi ^{\rho })^{1-\gamma }}(M-u(x))(u(x)-m)d\xi  \\
&=&{^{\rho }I_{\eta ,\kappa }^{\alpha ,\beta }}u^{2}(x)\left( \frac{\rho
^{1-\beta }\,x^{\kappa }}{\Gamma (\gamma )}\int_{0}^{x}\frac{{\xi ^{\rho
(\eta +1)-1}}}{(x^{\rho }-\xi ^{\rho })^{1-\gamma }}d\xi \right)  \\
&&+\Lambda _{x,\kappa }^{\rho,\beta}\left( \alpha ,\eta \right) \left( \frac{\rho
^{1-\beta }\,x^{\kappa }}{\Gamma (\gamma )}\int_{0}^{x}\frac{{\xi ^{\rho
(\eta +1)-1}}}{(x^{\rho }-\xi ^{\rho })^{1-\gamma }}u^{2}(\xi )d\xi \right) 
\\
&&-2\,{^{\rho }I_{\eta ,\kappa }^{\alpha ,\beta }}u(x)\left( \frac{\rho
^{1-\beta }\,x^{\kappa }}{\Gamma (\gamma )}\int_{0}^{x}\frac{{\xi ^{\rho
(\eta +1)-1}}}{(x^{\rho }-\xi ^{\rho })^{1-\gamma }}u(\xi )d\xi \right) .
\end{eqnarray*}%
From this last expression, follows immediately, \textnormal{Eq.(\ref{lema3})}.
\end{proof}
\end{lemma}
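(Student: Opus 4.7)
The proof is entirely parallel to the derivation of Lemma \ref{auxiliar-lemma1} from Eq.(\ref{eq.auxiliar}); the only new ingredient is to perform the second integration at a \emph{different} fractional order. Eq.(\ref{eq.lema1}), established during the proof of Lemma \ref{auxiliar-lemma1}, is already a pointwise identity in the free variable $\xi \in (0,x)$ for every fixed $x>0$. The natural move, therefore, is to multiply both sides of Eq.(\ref{eq.lema1}) by the Katugampola weight $\displaystyle \frac{\rho^{1-\beta} x^{\kappa}}{\Gamma(\gamma)}\frac{\xi^{\rho(\eta+1)-1}}{(x^{\rho}-\xi^{\rho})^{1-\gamma}}$ and integrate in $\xi$ from $0$ to $x$. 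This is exactly the operation already used to pass from Eq.(\ref{eq.teorema1}) to the estimate in Lemma \ref{auxiliar-lemma2}, applied here to a different integrand.

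Under this integration, each $\xi$-independent factor simply acquires the multiplier $\Lambda^{\rho,\beta}_{x,\kappa}(\gamma,\eta)$ (which is just the value of the $\gamma$-integral of the constant $1$ at $x$), whereas the $\xi$-dependent factors $u(\xi)$, $u^{2}(\xi)$ and $(M-u(\xi))(u(\xi)-m)$ are converted into ${}^{\rho}I^{\gamma,\beta}_{\eta,\kappa} u(x)$, ${}^{\rho}I^{\gamma,\beta}_{\eta,\kappa} u^{2}(x)$ and ${}^{\rho}I^{\gamma,\beta}_{\eta,\kappa}(M-u(x))(u(x)-m)$, respectively. Applying this term by term to the left-hand side of Eq.(\ref{eq.lema1}) produces the two symmetric cross products $(M\Lambda^{\rho,\beta}_{x,\kappa}(\gamma,\eta)-{}^{\rho}I^{\gamma,\beta}_{\eta,\kappa}u(x))({}^{\rho}I^{\alpha,\beta}_{\eta,\kappa}u(x)-m\Lambda^{\rho,\beta}_{x,\kappa}(\alpha,\eta))$ and its $\alpha \leftrightarrow \gamma$ companion, together with the two mixed-order correction terms $-\Lambda^{\rho,\beta}_{x,\kappa}(\gamma,\eta)\,{}^{\rho}I^{\alpha,\beta}_{\eta,\kappa}(M-u(x))(u(x)-m)$ and $-\Lambda^{\rho,\beta}_{x,\kappa}(\alpha,\eta)\,{}^{\rho}I^{\gamma,\beta}_{\eta,\kappa}(M-u(x))(u(x)-m)$. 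On the right-hand side of Eq.(\ref{eq.lema1}), the three summands become $\Lambda^{\rho,\beta}_{x,\kappa}(\gamma,\eta)\,{}^{\rho}I^{\alpha,\beta}_{\eta,\kappa}u^{2}(x)+\Lambda^{\rho,\beta}_{x,\kappa}(\alpha,\eta)\,{}^{\rho}I^{\gamma,\beta}_{\eta,\kappa}u^{2}(x)-2\,{}^{\rho}I^{\alpha,\beta}_{\eta,\kappa}u(x)\,{}^{\rho}I^{\gamma,\beta}_{\eta,\kappa}u(x)$, which is exactly the left-hand side of Eq.(\ref{lema3}) after transposition.

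I do not expect any conceptual obstacle in this argument; the whole content is careful bookkeeping. The one subtlety is to ensure that each $\Lambda$ is paired with the correct fractional order and, in particular, that the two originally equal $(M-u(x))(u(x)-m)$ contributions end up multiplied by the \emph{other} order, producing the asymmetric pair $\Lambda^{\rho,\beta}_{x,\kappa}(\alpha,\eta)\,{}^{\rho}I^{\gamma,\beta}_{\eta,\kappa}$ and $\Lambda^{\rho,\beta}_{x,\kappa}(\gamma,\eta)\,{}^{\rho}I^{\alpha,\beta}_{\eta,\kappa}$ that appears in the last two lines of Eq.(\ref{lema3}). Once the termwise correspondence is laid out as above, rearrangement gives the stated identity.
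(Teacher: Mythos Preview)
Your proposal is correct and follows exactly the same approach as the paper: multiply the pointwise identity Eq.(\ref{eq.lema1}) by the Katugampola kernel of order $\gamma$ and integrate in $\xi$ over $(0,x)$, then read off the result term by term. Your term-by-term bookkeeping is accurate and in fact more explicit than the paper's, which simply displays the integrated equation and declares that Eq.(\ref{lema3}) follows immediately.
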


Considering $\eta=0$, $\kappa=0$ and $\rho\rightarrow{1}$, in \textnormal{Lemma 2 and Lemma 3}, we obtain the results of \textnormal{Lemma 3.4 and Lemma 3.5} in \textnormal{\cite{dahmani2010new}}.

To prove the next theorem, we use Lemma \ref{auxiliar-lemma2} and Lemma \ref{auxiliar-lemma3}, previously
proved.
\begin{theorem}
\label{Gruss2} Let $f$ and $g$ be two integrable functions on $[0,\infty )$
satisfying the condition \textnormal{Eq.(\ref{condicoes})} on $[0,\infty )$.
Then for all $\beta ,\kappa \in \mathbb{R}$, $x>0$, $\alpha >0$, $\gamma >0$
and $\eta \geq {0}$, we have 
\begin{eqnarray}
&&\left( \Lambda _{x,\kappa }^{\rho,\beta}\left( \alpha ,\eta \right) \,{^{\rho
}I_{\eta ,\kappa }^{\gamma ,\beta }}fg(x)+\Lambda _{x,\kappa }^{\rho,\beta }\left(
\gamma ,\eta \right) \,{^{\rho }I_{\eta ,\kappa }^{\alpha ,\beta }}%
fg(x)\right.   \notag \\
&&\left. -\,{^{\rho }I_{\eta ,\kappa }^{\alpha ,\beta }}f(x)\,{^{\rho
}I_{\eta ,\kappa }^{\gamma ,\beta }}g(x)-{^{\rho }I_{\eta ,\kappa }^{\gamma
,\beta }}f(x)\,{^{\rho }I_{\eta ,\kappa }^{\alpha ,\beta }}g(x)\right) ^{2}
\label{teorema2} \\
&\leq &\left[ \left( M\Lambda _{x,\kappa }^{\rho,\beta }\left( \alpha ,\eta
\right) -\,{^{\rho }I_{\eta ,\kappa }^{\alpha ,\beta }}f(x)\right) \left( {%
^{\rho }I_{\eta ,\kappa }^{\gamma ,\beta }}f(x)-m\Lambda _{x,\kappa }^{\rho,\beta
}\left( \gamma ,\eta \right) \right) \right.   \notag \\
&&+\left. \left( {^{\rho }I_{\eta ,\kappa }^{\alpha ,\beta }}f(x)-m\Lambda
_{x,\kappa }^{\rho,\beta }\left( \alpha ,\eta \right) \right) \left( M\Lambda
_{x,\kappa }^{\rho,\beta }\left( \gamma ,\eta \right) -{^{\rho }I_{\eta ,\kappa
}^{\gamma ,\beta }}f(x)\right) \right]   \notag \\
&&\times \left[ \left( P\Lambda _{x,\kappa }^{\rho,\beta }\left( \alpha ,\eta
\right) -{^{\rho }I_{\eta ,\kappa }^{\alpha ,\beta }}g(x)\right) \left( {%
^{\rho }I_{\eta ,\kappa }^{\gamma ,\beta }}g(x)-p\Lambda _{x,\kappa }^{\rho,\beta
}\left( \gamma ,\eta \right) \right) \right.   \notag \\
&&+\left. \left( {^{\rho }I_{\eta ,\kappa }^{\alpha ,\beta }}g(x)-p\Lambda
_{x,\kappa }^{\rho,\beta }\left( \alpha ,\eta \right) \right) \left( P\Lambda
_{x,\kappa }^{\rho,\beta }\left( \gamma ,\eta \right) -{^{\rho }I_{\eta ,\kappa
}^{\gamma ,\beta }}g(x)\right) \right] .  \notag
\end{eqnarray}%

\begin{proof}
Since $(M-f(x))(f(x)-m)\geq {0}$, $(P-g(x))(g(x)-p)\geq {0}$, $x>0$ and $%
\rho >0$, we can write 
\begin{eqnarray}
&&-\Lambda _{x,\kappa }^{\rho,\beta}\left( \alpha ,\eta \right) \,{^{\rho
}I_{\eta ,\kappa }^{\gamma ,\beta }}(M-f(x))(f(x)-m)  \notag \\
&&-\Lambda _{x,\kappa }^{\rho,\beta }\left( \gamma ,\eta \right) \,{^{\rho
}I_{\eta ,\kappa }^{\alpha ,\beta }}(M-f(x))(f(x)-m)\leq {0}  \label{desig-m}
\end{eqnarray}%
and 
\begin{eqnarray}
&&-\Lambda _{x,\kappa }^{\rho,\beta}\left( \alpha ,\eta \right) \,{^{\rho
}I_{\eta ,\kappa }^{\gamma ,\beta }}(P-g(x))(g(x)-p)  \notag \\
&&-\Lambda _{x,\kappa }^{\rho,\beta }\left( \gamma ,\eta \right) \,{^{\rho
}I_{\eta ,\kappa }^{\alpha ,\beta }}(P-g(x))(g(x)-p)\leq {0}.
\label{desig-p}
\end{eqnarray}%
Applying \textnormal{Lemma 3} for $f$ and $g$, using \textnormal{Eq.(\ref%
{desig-m})} and \textnormal{Eq.(\ref{desig-p})}, we obtain 
\begin{eqnarray}
&&\left( \Lambda _{x,\kappa }^{\rho,\beta}\left( \alpha ,\eta \right) \,{^{\rho
}I_{\eta ,\kappa }^{\gamma ,\beta }}f^{2}(x)+\Lambda _{x,\kappa }^{\rho,\beta
}\left( \gamma ,\eta \right) \,{^{\rho }I_{\eta ,\kappa }^{\gamma ,\beta }}%
f^{2}(x)\right.   \notag \\
&&\left. -2\,{^{\rho }I_{\eta ,\kappa }^{\gamma ,\beta }}\,{^{\rho }I_{\eta
,\kappa }^{\alpha ,\beta }}f(x)\right) \leq \left( M\Lambda _{x,\kappa
}^{\rho,\beta }\left( \alpha ,\eta \right) -\,{^{\rho }I_{\eta ,\kappa }^{\alpha
,\beta }}f(x)\right)   \notag \\
&&\times \left( {^{\rho }I_{\eta ,\kappa }^{\gamma ,\beta }}f(x)-m\Lambda
_{x,\kappa }^{\rho,\beta }\left( \gamma ,\eta \right) \right) +\left( {^{\rho
}I_{\eta ,\kappa }^{\alpha ,\beta }}f(x)-m\Lambda _{x,\kappa }^{\rho,\beta }\left(
\alpha ,\eta \right) \right)   \notag \\
&&\left(M\Lambda _{x,\kappa }^{\rho,\beta }\left( \gamma ,\eta \right) -{^{\rho
}I_{\eta ,\kappa }^{\gamma ,\beta }}f(x)\right)   \label{num1}
\end{eqnarray}%
and 
\begin{eqnarray}
&&\left( \Lambda _{x,\kappa }^{\rho,\beta}\left( \alpha ,\eta \right) \,{^{\rho
}I_{\eta ,\kappa }^{\gamma ,\beta }}g^{2}(x)+\Lambda _{x,\kappa }^{\rho,\beta
}\left( \gamma ,\eta \right) \,{^{\rho }I_{\eta ,\kappa }^{\gamma ,\beta }}%
g^{2}(x)\right.   \notag \\
&&\left. -2\,{^{\rho }I_{\eta ,\kappa }^{\gamma ,\beta }}\,{^{\rho }I_{\eta
,\kappa }^{\alpha ,\beta }}g(x)\right) \leq \left( P\Lambda _{x,\kappa
}^{\rho,\beta }\left( \alpha ,\eta \right) -\,{^{\rho }I_{\eta ,\kappa }^{\alpha
,\beta }}g(x)\right)   \notag \\
&&\times \left( {^{\rho }I_{\eta ,\kappa }^{\gamma ,\beta }}g(x)-p\Lambda
_{x,\kappa }^{\rho,\beta }\left( \gamma ,\eta \right) \right) +\left( {^{\rho
}I_{\eta ,\kappa }^{\alpha ,\beta }}g(x)-p\Lambda _{x,\kappa }^{\rho,\beta }\left(
\alpha ,\eta \right) \right)   \notag \\
&&\left( P\Lambda _{x,\kappa }^{\rho,\beta }\left( \gamma ,\eta \right) -{^{\rho
}I_{\eta ,\kappa }^{\gamma ,\beta }}g(x)\right)   \label{num2}
\end{eqnarray}%
Considering the product \textnormal{Eq.(\ref{num1})} by \textnormal{Eq.(%
\ref{num2})}, using \textnormal{Lemma \ref{auxiliar-lemma2}}, follows
immediately, \textnormal{Eq.(\ref{teorema2})}.
\end{proof}
\end{theorem}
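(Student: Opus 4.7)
The plan is to combine the Cauchy--Schwarz-type inequality of Lemma \ref{auxiliar-lemma2} with the identity of Lemma \ref{auxiliar-lemma3}, and then discard two non-negative terms coming from the sign conditions on $f$ and $g$.

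First I would apply Lemma \ref{auxiliar-lemma2} directly to the functions $f$ and $g$. This produces Eq.(\ref{Eq:star}), whose left-hand side is exactly the squared quantity that appears on the left of Eq.(\ref{teorema2}). Therefore it suffices to bound, from above, each of the two factors on the right-hand side of Eq.(\ref{Eq:star}), namely
\[
\Lambda _{x,\kappa }^{\rho ,\beta }(\alpha ,\eta )\,{^{\rho }I_{\eta ,\kappa }^{\gamma ,\beta }}u^{2}(x)+\Lambda _{x,\kappa }^{\rho ,\beta }(\gamma ,\eta )\,{^{\rho }I_{\eta ,\kappa }^{\alpha ,\beta }}u^{2}(x)-2\,{^{\rho }I_{\eta ,\kappa }^{\alpha ,\beta }}u(x)\,{^{\rho }I_{\eta ,\kappa }^{\gamma ,\beta }}u(x),
\]
once with $u=f$ (bounds $m,M$) and once with $u=g$ (bounds $p,P$).

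Second, for each such factor I would apply Lemma \ref{auxiliar-lemma3}. This rewrites the factor as the sum of two cross products of the form $\bigl(M\Lambda^{\rho,\beta}_{x,\kappa}(\alpha,\eta)-{}^{\rho}I_{\eta,\kappa}^{\alpha,\beta}u(x)\bigr)\bigl({}^{\rho}I_{\eta,\kappa}^{\gamma,\beta}u(x)-m\Lambda^{\rho,\beta}_{x,\kappa}(\gamma,\eta)\bigr)$ (and its $\alpha\leftrightarrow\gamma$ counterpart), minus two fractional integrals of $(M-u(x))(u(x)-m)$ weighted by $\Lambda^{\rho,\beta}_{x,\kappa}(\alpha,\eta)$ and $\Lambda^{\rho,\beta}_{x,\kappa}(\gamma,\eta)$, respectively. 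Here is where the hypothesis Eq.(\ref{condicoes}) enters: since the integrand $(M-f(x))(f(x)-m)$ is non-negative (and similarly for $g$), both $\Lambda$-weighted fractional integrals are non-negative, so the inequalities Eq.(\ref{desig-m}) and Eq.(\ref{desig-p}) hold. Dropping those non-positive contributions converts the Lemma \ref{auxiliar-lemma3} identity into the upper bounds Eq.(\ref{num1}) and Eq.(\ref{num2}).

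Third, I would multiply Eq.(\ref{num1}) by Eq.(\ref{num2}), both sides being non-negative (the left-hand sides are non-negative because they coincide with the factor on the right of Eq.(\ref{Eq:star}), which itself dominates a square). Inserting this product bound into Eq.(\ref{Eq:star}) yields exactly Eq.(\ref{teorema2}) and completes the proof.

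The main obstacle is not a conceptual one but a bookkeeping one: the notation $\Lambda^{\rho,\beta}_{x,\kappa}(\cdot,\eta)$, combined with four different fractional-integral operators ${^{\rho}I_{\eta,\kappa}^{\alpha,\beta}}$ and ${^{\rho}I_{\eta,\kappa}^{\gamma,\beta}}$ acting on $f$, $g$, $f^{2}$, $g^{2}$ and $fg$, makes it easy to mis-pair terms when applying Lemma \ref{auxiliar-lemma3}. The care point is to ensure that the $\alpha$-index and $\gamma$-index factors are matched symmetrically on both sides, so that after multiplying Eq.(\ref{num1}) and Eq.(\ref{num2}) the resulting product matches term-for-term the bracketed expression on the right of Eq.(\ref{teorema2}). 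The non-negativity needed to multiply the two inequalities should also be verified, which follows from the fact that each factor equals a Cauchy--Schwarz difference and is therefore non-negative.
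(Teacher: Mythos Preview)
Your proposal is correct and follows essentially the same route as the paper: invoke Lemma~\ref{auxiliar-lemma2} for the Cauchy--Schwarz-type bound, apply Lemma~\ref{auxiliar-lemma3} to each factor, discard the non-positive terms via the sign conditions Eq.(\ref{desig-m})--(\ref{desig-p}) to obtain Eq.(\ref{num1})--(\ref{num2}), and then multiply. Your explicit remark that the left-hand sides of Eq.(\ref{num1}) and Eq.(\ref{num2}) are non-negative (so that the product of the two inequalities is legitimate) is a point the paper leaves implicit, and is worth keeping.
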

Taking $\alpha=\gamma$ in \textnormal{Theorem \ref{Gruss2}}, we obtain \textnormal{Theorem \ref{Gruss-T1}}. 
On the other hand, considering $\eta=0$, $\kappa=0$ and $\rho\rightarrow{1}$, 
in \textnormal{Theorem 2}, we obtain \textnormal{Theorem 3.3} in \textnormal{\cite{dahmani2010new}}. 
Considering \textnormal{Theorem \ref{Gruss2}} with $\alpha=\gamma=1$, $\eta=0$, $\kappa=0$ and 
$\rho\rightarrow{1}$, we recover \textnormal{Eq.(\ref{Gruss})}.\\
We mention that, the results obtained in \textnormal{Lemmas \ref{auxiliar-lemma1}, 
\ref{auxiliar-lemma2}, \ref{auxiliar-lemma3}} and \textnormal{Theorems \ref{Gruss-T1} 
and \ref{Gruss2}}, can be proved, considering the integral in \textnormal{Eq.(\ref{B3})}, 
from $a=1$ to $x$, in order to obtain, as a particular cases, the generalized inequalities
of Gruss-type, discussed in \textnormal{\cite{vaijanath}}, with the Hadamard fractional integral.
\section{Other fractional integral inequalities} 
In this section, we present some integral inequalities involving the Katugampola's fractional
operator. The results obtained were adapted from the paper Chinchane \& Pachpatte \textnormal{\cite{vaijanath}}, in which makes a brief approach with respect inequalities of Gruss-type, but in Hadamard sense.

A priori, it should be emphasize that, although the following results are adapted for the Katugampola's
operator, the left-sided, Eq.(\ref{B3}), and that with convenient condition on the parameters of the
fractional operator it is possible to obtain the Hadamard's operator. In this sense, the results presented 
here are, in fact, true for the Hadamard's operator when we admit $a=1$ in Eq.(\ref{B1}).

In this way, we discuss some theorems involving fractional integrals inequalities.
\begin{theorem} 
Let $\alpha>0$, $\beta,\rho,\eta,\kappa\in\mathbb{R}$ and $f,g\in X^{p}_{c}(0,x)$ be two positive functions defined on $[0,\infty)$, $x>0$ and $p,q>1$ satisfying $\frac{1}{p}+\frac{1}{q}=1$. Then the following inequalities hold:

\begin{enumerate}
\item[\textnormal 1.] $\displaystyle\frac{^{\rho }\mathcal{I}_{\eta ,\kappa }^{\alpha ,\beta }f^{p}\left( x\right) }{p}
+\frac{^{\rho }\mathcal{I}_{\eta ,\kappa }^{\alpha ,\beta }g^{q}\left( x\right) }{q}\geq \frac{\Gamma 
\left( \eta +\alpha +1\right) \rho ^{\beta }}{\Gamma\left( \eta +1\right) x^{\rho \left( \eta +\alpha \right) 
+\kappa }}\left(^{\rho }\mathcal{I}_{\eta ,\kappa }^{\alpha ,\beta }f\left( x\right) ^{\rho }
\mathcal{I}_{\eta,\kappa }^{\alpha ,\beta }g\left( x\right) \right).$

\item[\textnormal 2.] $\displaystyle\frac{^{\rho }\mathcal{I}_{\eta ,\kappa }^{\alpha ,\beta }f^{p}\left( x\right)^{\rho}
\mathcal{I}_{\eta ,\kappa }^{\alpha ,\beta }g^{p}\left( x\right) }{p}+\frac{^{\rho }
\mathcal{I}_{\eta ,\kappa }^{\alpha ,\beta }f^{q}\left( x\right) ^{\rho }I_{\eta ,\kappa }^{\alpha ,\beta }
g^{q}\left( x\right) }{q}\geq \left( ^{\rho }\mathcal{I}_{\eta ,\kappa }^{\alpha ,\beta }
f\left( x\right) g\left( x\right) \right) ^{2}.$

\item[\textnormal 3.] $\displaystyle\frac{^{\rho }\mathcal{I}_{\eta ,\kappa }^{\alpha ,\beta }
f^{p}\left( x\right) ^{\rho }\mathcal{I}_{\eta ,\kappa }^{\alpha ,\beta }g^{q}\left( x\right) }{p}+
\frac{^{\rho }\mathcal{I}_{\eta ,\kappa }^{\alpha ,\beta }f^{q}\left( x\right) ^{\rho }
\mathcal{I}_{\eta ,\kappa }^{\alpha ,\beta }g^{p}\left( x\right) }{q}\geq \left( ^{\rho }
\mathcal{I}_{\eta ,\kappa }^{\alpha ,\beta }\left( fg\right) ^{p-1}\left( x\right) \right) 
\left( ^{\rho }\mathcal{I}_{\eta ,\kappa }^{\alpha ,\beta }\left( fg\right) ^{q-1}\left( x\right) 
\right).$

\item[\textnormal 4.] $\displaystyle^{\rho }\mathcal{I}_{\eta ,\kappa }^{\alpha ,\beta }f^{p}\left( x\right) ^{\rho }
\mathcal{I}_{\eta ,\kappa }^{\alpha ,\beta }g^{q}\left( x\right) \geq \left( ^{\rho }
\mathcal{I}_{\eta ,\kappa }^{\alpha ,\beta }f\left( x\right) g\left( x\right) \right)
\left( ^{\rho }\mathcal{I}_{\eta ,\kappa }^{\alpha ,\beta }f^{p-1}\left( x\right) g^{q-1}
\left( x\right) \right).$
\end{enumerate}
\end{theorem}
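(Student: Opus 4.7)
The plan is to derive each of the four items by the same device used in the proofs of Theorems \ref{Gruss-T1} and \ref{Gruss2}: apply the pointwise Young inequality $AB\le A^p/p+B^q/q$ (valid for $A,B\ge 0$ with $1/p+1/q=1$) to a carefully chosen product of $f$ and $g$ evaluated at two dummy variables $\tau,\xi\in(0,x)$, multiply by the symmetric Katugampola double weight
\[
\frac{\rho^{2(1-\beta)}x^{2\kappa}}{\Gamma(\alpha)^2}\,
\frac{\tau^{\rho(\eta+1)-1}\,\xi^{\rho(\eta+1)-1}}{[(x^\rho-\tau^\rho)(x^\rho-\xi^\rho)]^{1-\alpha}},
\]
and integrate over $(0,x)\times(0,x)$. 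By Fubini each weighted double integral factors as a product of one-dimensional operators ${}^{\rho}\mathcal{I}_{\eta,\kappa}^{\alpha,\beta}$ (abbreviated ${}^{\rho}\mathcal{I}$ below), and whenever a $\tau$- or $\xi$-factor equals $1$ the corresponding single integration produces $\Lambda_{x,\kappa}^{\rho,\beta}(\alpha,\eta)$ defined in Eq.(\ref{PCM}). In particular, the scalar $\Gamma(\eta+\alpha+1)\rho^\beta/[\Gamma(\eta+1)\,x^{\rho(\eta+\alpha)+\kappa}]$ on the right of item~1 is exactly $1/\Lambda_{x,\kappa}^{\rho,\beta}(\alpha,\eta)$.

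For item~1 I would take $A=f(\tau)$, $B=g(\xi)$: Young reads $f(\tau)g(\xi)\le f^p(\tau)/p+g^q(\xi)/q$, and the weighted double integration turns this into ${}^{\rho}\mathcal{I}f\cdot{}^{\rho}\mathcal{I}g\le\Lambda_{x,\kappa}^{\rho,\beta}(\alpha,\eta)\,\bigl[{}^{\rho}\mathcal{I}f^p/p+{}^{\rho}\mathcal{I}g^q/q\bigr]$; dividing by $\Lambda_{x,\kappa}^{\rho,\beta}(\alpha,\eta)$ yields item~1. For item~2 take $A=f(\tau)g(\xi)$ and $B=f(\xi)g(\tau)$, so that $AB=(fg)(\tau)(fg)(\xi)$ integrates to $({}^{\rho}\mathcal{I}fg)^2$, while $A^p$ and $B^q$ separate in the two variables and integrate to ${}^{\rho}\mathcal{I}f^p\cdot{}^{\rho}\mathcal{I}g^p$ and ${}^{\rho}\mathcal{I}f^q\cdot{}^{\rho}\mathcal{I}g^q$ respectively. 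For item~4 the identities $p(q-1)=q$ and $q(p-1)=p$ (both equivalent to the conjugacy $1/p+1/q=1$) suggest $A=f(\tau)g(\xi)^{q-1}$, $B=g(\tau)f(\xi)^{p-1}$: then $AB=(fg)(\tau)\,(f^{p-1}g^{q-1})(\xi)$, while both $A^p=f^p(\tau)g^q(\xi)$ and $B^q=g^q(\tau)f^p(\xi)$ integrate to the \emph{same} product ${}^{\rho}\mathcal{I}f^p\cdot{}^{\rho}\mathcal{I}g^q$, and the sum $1/p+1/q=1$ collapses Young's right-hand side to that single product, producing item~4 at once.

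The main obstacle is item~3. The natural Young choice aimed at reproducing the two cross-products $f^p(\tau)g^q(\xi)/p$ and $f^q(\tau)g^p(\xi)/q$ on the right of item~3 forces $AB=f(\tau)^2g(\xi)^{pq-2}$, which is not the desired $(fg)^{p-1}(\tau)(fg)^{q-1}(\xi)$. The workaround I would pursue is to start from the identity
\[
(fg)^{p-1}(\tau)(fg)^{q-1}(\xi)=\bigl[f^{p-1}(\tau)g^{q-1}(\xi)\bigr]\,\bigl[g^{p-1}(\tau)f^{q-1}(\xi)\bigr]
\]
and apply Young to this pair; the exponents $p(p-1)$ and $q(q-1)$ that then appear in $A^p/p$ and $B^q/q$ do not simplify to $p,q$ directly, so I would symmetrize in $\tau\leftrightarrow\xi$ (legal because the double weight above is symmetric) and supplement with an additional Young or H\"older step to absorb the extra exponents, exploiting once more $p(q-1)=q$ and $q(p-1)=p$. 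This bookkeeping is where the bulk of the work lives; once the correct Young pair for item~3 is identified, Fubini together with Definition (\ref{B3}) closes the argument mechanically.
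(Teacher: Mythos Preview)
Your handling of items~1, 2 and 4 is correct and is exactly the paper's argument. The paper too applies Young's inequality pointwise in two dummy variables and then integrates against the Katugampola kernel in each: $a=f(t)$, $b=g(s)$ for item~1, $a=f(t)g(s)$, $b=f(s)g(t)$ for item~2, and for item~4 the paper takes $a=f(s)/f(t)$, $b=g(s)/g(t)$ and multiplies through by $f^{p}(t)g^{q}(t)$, which produces precisely the same intermediate inequality as your choice $A=f(\tau)g(\xi)^{q-1}$, $B=g(\tau)f(\xi)^{p-1}$ (using $p(q-1)=q$, $q(p-1)=p$). So on these three items there is nothing to add.

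The gap is item~3. Your proposed fix --- symmetrize in $\tau\leftrightarrow\xi$ and patch the exponents with an extra Young/H\"older step --- is not actually carried out, and as you yourself observe the exponents $p(p-1)$, $q(q-1)$ do not collapse. The paper's device is the one you missed, and it is the simplest of the four: take $a=f(\tau)/g(\tau)$ and $b=f(\xi)/g(\xi)$ in Young, then clear denominators by multiplying through by $g^{p}(\tau)g^{q}(\xi)$. This gives at once
\[
\frac{f^{p}(\tau)\,g^{q}(\xi)}{p}+\frac{f^{q}(\xi)\,g^{p}(\tau)}{q}\ \ge\ \bigl(f(\tau)g^{\,p-1}(\tau)\bigr)\bigl(f(\xi)g^{\,q-1}(\xi)\bigr),
\]
and the weighted double integration yields item~3 with right-hand side ${}^{\rho}\mathcal{I}_{\eta,\kappa}^{\alpha,\beta}\!\bigl(fg^{\,p-1}\bigr)(x)\cdot{}^{\rho}\mathcal{I}_{\eta,\kappa}^{\alpha,\beta}\!\bigl(fg^{\,q-1}\bigr)(x)$. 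This is what the paper's proof actually produces; the printed right-hand side with $(fg)^{p-1}$ and $(fg)^{q-1}$ is evidently a misprint, and your difficulty in reaching that expression by a single Young application is a symptom of the typo rather than of a genuine obstruction.
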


\begin{proof} 1. Considering Young inequality \cite{KREZ},
\begin{equation}\label{A1}
ab\leq \frac{a^{p}}{p}+\frac{b^{q}}{q},\text{ }\forall a,b\geq 0,\text{ }p,q>1, \text{ } \frac{1}{p}+\frac{1}{q}=1,
\end{equation}
and putting $a=f(t)$ and $b=f(s)$, $s>0$, in inequality {\rm Eq.\rm(\ref{A1})}, we have
\begin{equation}\label{A2}
\frac{f^{p}\left( t\right) }{p}+\frac{g^{q}\left( s\right) }{q}\geq f\left( t\right) g\left( s\right) 
,\text{ }\forall f\left( t\right) g\left( s\right) \geq 0.
\end{equation}

Multiplying by $\displaystyle\frac{\rho ^{1-\beta }x^{\kappa }t^{\rho \left( \eta +1\right) -1}}
{\Gamma\left( \alpha \right) \left( x^{\rho }-t^{\rho }\right) ^{1-\alpha }}$ both sides of {\rm Eq.\rm(\ref{A2})}, and integrating with respect to the variable $t$ on $\in (0,x)$, $x>0$, we have
\begin{eqnarray}
&&\frac{\rho ^{1-\beta }x^{\kappa }}{p\Gamma \left( \alpha \right) }%
\int_{0}^{x}\frac{t^{\rho \left( \eta +1\right) -1}}{\left( x^{\rho
}-t^{\rho }\right) ^{1-\alpha }}f^{p}\left( t\right) dt+\frac{g^{q}\left(
s\right) \rho ^{1-\beta }x^{\kappa }}{q\Gamma \left( \alpha \right) }%
\int_{0}^{x}\frac{t^{\rho \left( \eta +1\right) -1}}{\left( x^{\rho
}-t^{\rho }\right) ^{1-\alpha }}dt  \notag \\
&\geq &\frac{g\left( s\right) \rho ^{1-\beta }x^{\kappa }}{p\Gamma \left(
\alpha \right) }\int_{0}^{x}\frac{t^{\rho \left( \eta +1\right) -1}}{\left(
x^{\rho }-t^{\rho }\right) ^{1-\alpha }}f\left( t\right) dt,
\end{eqnarray}
which can be rewritten as follows,
\begin{equation}\label{A3}
\frac{^{\rho }\mathcal{I}_{\eta ,\kappa }^{\alpha ,\beta }f^{p}\left( x\right) }{p}+\frac{g^{q}
\left( s\right) \rho ^{1-\beta }x^{\kappa }}{q\Gamma \left(\alpha \right) }
\int_{0}^{x}\frac{t^{\rho \left( \eta +1\right) -1}}{\left(x^{\rho }-t^{\rho }\right)^{1-\alpha }}dt
\geq g\left( s\right) ^{\rho}\mathcal{I}_{\eta ,\kappa }^{\alpha ,\beta }f\left( x\right) .  
\end{equation}
Further with the variable change $u=\frac{t^{\rho }}{x^{\rho }}$ in the integral 
$\displaystyle\int_{0}^{x}\frac{t^{\rho \left( \eta +1\right) -1}}{\left( x^{\rho }-t^{\rho }
\right) ^{1-\alpha }}dt$, we obtain
\begin{equation}\label{A4}
\int_{0}^{x}\frac{t^{\rho \left( \eta +1\right) -1}}{\left( x^{\rho
}-t^{\rho }\right) ^{1-\alpha }}dt=\int_{0}^{1}\frac{u^{\eta }x^{\rho \eta }}
{x^{\rho \left( 1-\alpha \right) }\left( 1-u\right) ^{1-\alpha }}\frac{x^{\rho }}{\rho }du=
\frac{x^{\rho\left( \eta +\alpha \right) }}{\rho }B\left( \eta +1,\alpha \right) ,
\end{equation}
where $B(a,b)$ is the Beta function.
Using the following identity $B\left( a,b\right) =\frac{\Gamma \left( a\right) \Gamma \left( b\right)}
{\Gamma \left( a+b\right) }$ in {\rm Eq.\rm(\ref{A4})} and replacing the result in {\rm Eq.\rm(\ref{A3})}, 
we have
\begin{equation}\label{A5}
\frac{^{\rho }\mathcal{I}_{\eta ,\kappa }^{\alpha ,\beta }f^{p}\left( x\right) }{p}+
\frac{g^{q}\left( s\right) x^{\rho \left( \eta +\alpha \right) +\kappa }}{q\rho ^{\beta}}
\frac{\Gamma \left( \eta +1\right) }{\Gamma \left( \eta+\alpha +1\right) }\geq g\left( s\right)^{\rho }
\mathcal{I}_{\eta ,\kappa }^{\alpha ,\beta }f\left( x\right).
\end{equation}
Multiplying by $\displaystyle\frac{\rho ^{1-\beta }s^{\rho \left( \eta +1\right) -1}}
{\Gamma \left( \alpha \right) \left( x^{\rho }-s^{\rho }\right) ^{1-\alpha }}$ both
sides of {\rm Eq.\rm(\ref{A5})} and integrating with respect to the variable $s$ on $\in(0,x)$, $x>0$, we have
\begin{eqnarray*}
&&\frac{^{\rho }\mathcal{I}_{\eta ,\kappa }^{\alpha ,\beta }f^{p}\left( x\right) }
{p\Gamma \left( \alpha \right) }\rho ^{1-\beta }x^{\kappa }\int_{0}^{x}
\frac{s^{\rho \left( \eta +1\right) -1}}{\left( x^{\rho }-s^{\rho }\right) ^{1-\alpha }}ds
+\frac{\Gamma \left( \eta +1\right) x^{\rho \left( \eta+\alpha \right) +2\kappa }\rho ^{1-\beta }}
{\Gamma \left( \alpha \right)\Gamma \left( \eta +\alpha +1\right) q\rho ^{\beta }}
\int_{0}^{x}\frac{s^{\rho\left( \eta +1\right) -1}}{\left( x^{\rho }-s^{\rho }\right)^{1-\alpha }}
g^{q}\left( s\right) ds \notag \\
&\geq &\left( ^{\rho }\mathcal{I}_{\eta ,\kappa }^{\alpha ,\beta }f\left( x\right) \right) 
\frac{\rho ^{1-\beta }x^{\kappa }}{\Gamma \left( \alpha \right) }\int_{0}^{x}
\frac{s^{\rho \left( \eta +1\right) -1}}{\left( x^{\rho}-s^{\rho }\right) ^{1-\alpha}}g\left( s\right) ds,
\end{eqnarray*}
which can be rewritten as follows,
\begin{eqnarray}
&&\bigskip \frac{^{\rho }\mathcal{I}_{\eta ,\kappa }^{\alpha ,\beta
}f^{p}\left( x\right) \rho ^{1-\beta }x^{\kappa }}{p\Gamma \left( \alpha
\right) }\int_{0}^{x}\frac{s^{\rho \left( \eta +1\right) -1}}{\left( x^{\rho
}-t^{\rho }\right) ^{1-\alpha }}ds+\frac{x^{\rho \left( \eta +\alpha \right)
+\kappa }\Gamma \left( \eta +1\right) }{q\rho ^{\beta }\Gamma \left( \eta
+\alpha +1\right) }\left( ^{\rho }\mathcal{I}_{\eta ,\kappa }^{\alpha ,\beta
}g^{q}\left( x\right) \right)   \notag  \label{A6} \\
&\geq &\left( ^{\rho }\mathcal{I}_{\eta ,\kappa }^{\alpha ,\beta }f\left(
x\right) ^{\rho }\mathcal{I}_{\eta ,\kappa }^{\alpha ,\beta }g\left(
x\right) \right) .
\end{eqnarray}
Further with the variable change $u=\frac{s^{\rho }}{x^{\rho }}$ in integral
$\displaystyle\int_{0}^{x}\frac{s^{\rho \left( \eta +1\right) -1}}{\left( x^{\rho }-t^{\rho }
\right) ^{1-\alpha }}ds$, we obtain
\begin{equation}\label{A7}
\int_{0}^{x}\frac{s^{\rho \left( \eta +1\right) -1}}{\left( x^{\rho
}-t^{\rho }\right) ^{1-\alpha }}ds=\frac{x^{\rho \left( \eta +\alpha
\right) }\Gamma \left( \eta +1\right) \Gamma \left( \alpha \right) }{\rho
\Gamma \left( \eta +\alpha +1\right) }.
\end{equation}
Replacing {\rm Eq.\rm(\ref{A7})} in {\rm Eq.\rm(\ref{A6})}, we conclude that
\begin{equation*}
\bigskip \frac{^{\rho }\mathcal{I}_{\eta ,\kappa }^{\alpha ,\beta }f^{p}\left(
x\right) }{p}+\frac{^{\rho }\mathcal{I}_{\eta ,\kappa }^{\alpha ,\beta }g^{p}\left( x\right) }{q}\geq \frac{\Gamma \left( \eta +\alpha +1\right) \rho ^{\beta }}{\Gamma 
\left( \eta +1\right) x^{\rho \left( \eta +\alpha \right) +\kappa }}\left( ^{\rho }\mathcal{I}_{\eta ,\kappa }^{\alpha ,\beta }f\left( x\right) ^{\rho}\mathcal{I}_{\eta ,\kappa }^{\alpha ,\beta }g\left( x\right) \right) .
\end{equation*}

\item 2.  For the proof item {\rm (2)}, we take $a=f\left( t\right)g\left(s\right)$ 
and $b=f\left(s\right)g\left(t\right) $ and replacing in Young inequality following 
the same as in item {\rm(1)}.

\item 3. To prove item {\rm(3)}, we take $a=\frac{f\left( t\right) }{g\left( t\right) }$ 
and $b=\frac{f\left( s\right) }{g\left( s\right) }$, then replacing in Young inequality, 
in the same way as in item {\rm(1)}.
\item 4. Putting $a=\frac{f\left( s\right) }{f\left( t\right) }$ and 
$b=\frac{g\left( s\right) }{g\left( t\right) }$, $f(t), g(t)\neq 0$, 
and replacing in the {\rm Eq.\rm(\ref{A1})}, we get
\begin{equation}\label{A9}
\frac{f^{p}\left( t\right) }{p}+\frac{g^{q}\left( s\right) }{q}\geq f\left( t\right) g\left( s\right),
\text{ }\forall f\left( t\right) g\left( s\right) \geq 0.
\end{equation}
Multiplying by $\displaystyle\frac{\rho ^{1-\beta }x^{\kappa }t^{\rho \left( \eta +1\right) -1}}
{\Gamma \left( \alpha \right) \left( x^{\rho }-t^{\rho }\right) ^{1-\alpha }}$ both sides of 
{\rm Eq.\rm(\ref{A9})}, and integrating with respect to the variable $t$ on $(0,x)$, $x>0$, we have
\begin{equation}\label{A10}
f^{p}\left( s\right) \left( ^{\rho }\mathcal{I}_{\eta ,\kappa }^{\alpha ,\beta
}g^{q}\left( x\right) \right) +g^{q}\left( s\right) \left( ^{\rho }\mathcal{I}_{\eta ,\kappa }^{\alpha ,\beta }
f^{p}\left( x\right) \right) \geq f\left( s\right) g\left( s\right) \left( ^{\rho }
\mathcal{I}_{\eta ,\kappa }^{\alpha ,\beta }f^{p-1}\left( x\right) g^{q-1}\left( x\right) \right) .
\end{equation}
Again multiplying by $\displaystyle\frac{\rho ^{1-\beta }x^{\kappa }s^{\rho \left( \eta +1\right) -1}} {\Gamma \left( \alpha \right) \left( x^{\rho }-t^{\rho }\right) ^{1-\alpha }}$ both sides of {\rm Eq.\rm(\ref{A10})}, and integrating with respect to the variable $s$ on $(0,x)$, $x>0$, we get
\begin{eqnarray}\label{A11}
&&\frac{^{\rho }\mathcal{I}_{\eta ,\kappa}^{\alpha ,\beta}g^{q}\left( x\right)\rho ^{1-\beta }
x^{\kappa}}{p\Gamma \left( \alpha \right) }\int_{0}^{x}\frac{s^{\rho\left( \eta +1\right) -1}}
{\left( x^{\rho }-s^{\rho }\right) ^{1-\alpha }}f^{p}\left( s\right) ds+\frac{^{\rho }
\mathcal{I}_{\eta ,\kappa }^{\alpha ,\beta }f^{p}\left( x\right) \rho ^{1-\beta }x^{\kappa }}
{q\Gamma \left( \alpha \right) }\int_{0}^{x}\frac{s^{\rho \left( \eta +1\right) -1}}
{\left( x^{\rho }-s^{\rho }\right) ^{1-\alpha }}g^{q}\left( s\right)ds  \notag \\
&\geq &\left( ^{\rho }\mathcal{I}_{\eta ,\kappa }^{\alpha ,\beta }f^{p-1}\left(x\right) g^{q-1}
\left( x\right) \right) \frac{\rho ^{1-\beta }x^{\kappa }}{\Gamma \left( \alpha \right) }
\int_{0}^{x}\frac{s^{\rho \left( \eta +1\right) -1}}{\left( x^{\rho }-s^{\rho }\right) ^{1-\alpha }}
f\left(s\right) g\left( s\right) ds.
\end{eqnarray}
Using the identity $\frac{1}{p}+\frac{1}{q}=1$ in {\rm Eq.\rm(\ref{A11})}, we conclude that
\begin{equation*}
^{\rho }\mathcal{I}_{\eta ,\kappa }^{\alpha ,\beta }g^{q}\left( x\right) ^{\rho}
\mathcal{I}_{\eta ,\kappa }^{\alpha ,\beta }f^{p}\left( x\right) \geq \left( ^{\rho }
\mathcal{I}_{\eta ,\kappa }^{\alpha ,\beta }f\left( x\right) g\left( x\right) \right)
\left( ^{\rho }\mathcal{I}_{\eta ,\kappa }^{\alpha ,\beta }f^{p-1}\left( x\right)g^{q-1}
\left( x\right) \right),
\end{equation*}
which is the \textnormal{item (4)}.
\end{proof} 

\begin{theorem} Let $\alpha>0$, $\beta,\rho,\eta,\kappa\in\mathbb{R}$ and $f,g\in X^{p}_{c}(0,x)$ be two positive functions on $[0,\infty)$, $x>0$ and $p,q>1$ satisfying $\frac{1}{p}+\frac{1}{q}=1$. Then, follow the inequalities:
\begin{enumerate}
\item[\textnormal 1.] $\displaystyle\frac{^{\rho }\mathcal{I}_{\eta ,\kappa }^{\alpha ,\beta }f^{p}
\left( x\right) ^{\rho }\mathcal{I}_{\eta ,\kappa }^{\alpha ,\beta }g^{2}\left(  x\right)}{p}
+\frac{^{\rho }\mathcal{I}_{\eta ,\kappa }^{\alpha ,\beta }f^{2}\left( x\right) ^{\rho }
\mathcal{I}_{\eta ,\kappa }^{\alpha ,\beta }g^{q}\left( x\right) }{q}\geq \left( ^{\rho }
\mathcal{I}_{\eta ,\kappa }^{\alpha ,\beta }f\left( x\right) g\left( x\right) \right)
\left( ^{\rho }\mathcal{I}_{\eta ,\kappa }^{\alpha ,\beta }f^{\frac{2}{q}}\left(x\right) 
g^{\frac{2}{p}}\left( x\right) \right).
$

\item[\textnormal 2.] $\displaystyle\frac{^{\rho }\mathcal{I}_{\eta ,\kappa }^{\alpha ,\beta }
f^{2}\left( x\right) ^{\rho }\mathcal{I}_{\eta ,\kappa }^{\alpha ,\beta }g^{q}\left( x\right) }{p}
+\frac{^{\rho }\mathcal{I}_{\eta ,\kappa }^{\alpha ,\beta }f^{q}\left( x\right) ^{\rho }
\mathcal{I}_{\eta ,\kappa }^{\alpha ,\beta }g^{2}\left( x\right) }{q}\geq \left( ^{\rho }
\mathcal{I}_{\eta ,\kappa }^{\alpha ,\beta }f^{\frac{2}{q}}\left( x\right) g^{\frac{2}{p}}
\left( x\right) \right) \left( ^{\rho }\mathcal{I}_{\eta ,\kappa }^{\alpha ,\beta }f^{p-1}
\left( x\right) g^{q-1}\left( x\right) \right).
$

\item[\textnormal 3.] $\displaystyle^{\rho }\mathcal{I}_{\eta ,\kappa }^{\alpha ,\beta }f^{2}
\left( x\right) ^{\rho }\mathcal{I}_{\eta ,\kappa }^{\alpha ,\beta }\left( \frac{g^{p}\left( x\right) }{p}
+\frac{g^{q}\left( x\right) }{q}\right) \geq \left( ^{\rho }\mathcal{I}_{\eta ,\kappa }^{\alpha ,\beta }
f^{\frac{2}{p}}\left( x\right) g\left( x\right) \right) \left( ^{\rho }\mathcal{I}_{\eta ,\kappa }^{\alpha ,\beta }
f^{\frac{2}{q}}\left( x\right) g\left( x\right) \right).$

\end{enumerate}
\end{theorem}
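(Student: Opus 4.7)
The plan is to prove all three items by the two-step procedure already used in the preceding theorem: apply Young's inequality $\frac{a^{p}}{p}+\frac{b^{q}}{q}\ge ab$ pointwise at two auxiliary variables $t,s\in(0,x)$ with a carefully chosen pair of positive functions $a=a(t,s)$ and $b=b(t,s)$, then multiply by the Katugampola kernel $\frac{\rho^{1-\beta}x^{\kappa}\,t^{\rho(\eta+1)-1}}{\Gamma(\alpha)(x^{\rho}-t^{\rho})^{1-\alpha}}$ and integrate $t$ from $0$ to $x$, and finally repeat with the analogous kernel in $s$. The double integration turns the pointwise Young bound into an inequality between products of Katugampola integrals, provided $a^{p}$, $b^{q}$, and $ab$ each separate as a function of $t$ alone times a function of $s$ alone.

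For item 1 I would take $a=f(t)\,g(s)^{2/p}$ and $b=g(t)\,f(s)^{2/q}$. Then $a^{p}=f(t)^{p}g(s)^{2}$ double-integrates to ${}^{\rho}\mathcal{I}_{\eta,\kappa}^{\alpha,\beta}f^{p}(x)\cdot{}^{\rho}\mathcal{I}_{\eta,\kappa}^{\alpha,\beta}g^{2}(x)$, $b^{q}=g(t)^{q}f(s)^{2}$ produces the symmetric term, and $ab=f(t)g(t)\cdot f(s)^{2/q}g(s)^{2/p}$ delivers exactly the claimed RHS. For item 3, the substitution $a=f(t)^{2/p}g(s)$ and $b=g(t)f(s)^{2/q}$ works: both $a^{p}=f(t)^{2}g(s)^{p}$ and $b^{q}=g(t)^{q}f(s)^{2}$ factor correctly, and the two resulting LHS contributions collapse, by linearity of ${}^{\rho}\mathcal{I}_{\eta,\kappa}^{\alpha,\beta}$, into ${}^{\rho}\mathcal{I}_{\eta,\kappa}^{\alpha,\beta}f^{2}(x)\cdot{}^{\rho}\mathcal{I}_{\eta,\kappa}^{\alpha,\beta}(g^{p}/p+g^{q}/q)(x)$, while $ab$ yields $({}^{\rho}\mathcal{I}_{\eta,\kappa}^{\alpha,\beta}f^{2/p}g(x))\cdot({}^{\rho}\mathcal{I}_{\eta,\kappa}^{\alpha,\beta}f^{2/q}g(x))$.

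For item 2 the same recipe is applied with an analogous choice of $a$ and $b$ whose $p$-th and $q$-th powers produce, upon double integration, the two pieces $\frac{{}^{\rho}\mathcal{I}_{\eta,\kappa}^{\alpha,\beta}f^{2}\cdot{}^{\rho}\mathcal{I}_{\eta,\kappa}^{\alpha,\beta}g^{q}}{p}$ and $\frac{{}^{\rho}\mathcal{I}_{\eta,\kappa}^{\alpha,\beta}f^{q}\cdot{}^{\rho}\mathcal{I}_{\eta,\kappa}^{\alpha,\beta}g^{2}}{q}$ of the LHS; the identities $q/p=q-1$ and $p/q=p-1$, both consequences of $\frac{1}{p}+\frac{1}{q}=1$, are then invoked to rewrite the fractional exponents coming from the factorization of $ab$ in the $f^{p-1}g^{q-1}$ and $f^{2/q}g^{2/p}$ form used in the stated RHS.

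The main obstacle is the combinatorial bookkeeping of choosing $a$ and $b$ so that $a^{p}$, $b^{q}$ and $ab$ each split cleanly as a pure $t$-factor times a pure $s$-factor with exponents matching both the LHS and the RHS of the stated inequality. Once the substitution is in hand, the remaining manipulations—multiplication by the kernels, interchange of the iterated integrals, and extraction of any residual $\Lambda_{x,\kappa}^{\rho,\beta}$-type constants via the Beta-function identity $B(\eta+1,\alpha)=\Gamma(\eta+1)\Gamma(\alpha)/\Gamma(\eta+\alpha+1)$—are entirely routine and parallel the calculations already carried out in the proof of item~1 of the preceding theorem.
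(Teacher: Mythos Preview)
Your approach is correct and essentially matches the paper's: apply Young's inequality pointwise at $(t,s)$ with a suitable choice of $a,b$, then double-integrate against the Katugampola kernels. Your substitution for item~1 coincides with the paper's, and for item~3 the paper writes the equivalent reciprocal form $a=f^{2/p}(t)/g(s)$, $b=f^{2/q}(s)/g(t)$ and clears denominators as in item~2; for item~2, which you left implicit, the paper's explicit choice is $a=f^{2/p}(t)/f(s)$, $b=g^{2/q}(t)/g(s)$, multiplied through by $f^{p}(s)g^{q}(s)$ before integrating. One small correction to your last paragraph: no Beta-function evaluation or $\Lambda_{x,\kappa}^{\rho,\beta}$ constants arise in this theorem, since neither kernel is ever integrated against the constant function~$1$; that step was needed only in the preceding theorem.
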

\begin{proof}
\item 1. Taking $a=f\left( t\right) g^{\frac{2}{p}}\left( s\right) $ and $b=f^{\frac{2}{q}}\left( s\right) 
g\left( t\right) $ and replacing in {\rm Eq.\rm(\ref{A1})}, we have
\begin{equation}\label{A12}
\frac{f^{p}\left( t\right) g^{2}\left( s\right) }{p}+\frac{g^{q}\left(
s\right) f\left( s\right) }{q}\geq f\left( t\right) g\left( s\right) f^{\frac{2}{p}}
\left( t\right) g^{\frac{2}{q}}\left( s\right).   
\end{equation}
Multiplying by $\displaystyle\frac{\rho ^{1-\beta }x^{\kappa }t^{\rho \left( \eta +1\right) -1}}{\Gamma \left( \alpha \right) \left( x^{\rho }-t^{\rho }\right) ^{1-\alpha }}$ both sides of {\rm Eq.\rm(\ref{A12})}, and integrating with respect to the variable $t$ on $(0,x)$, 
$x>0$, we have
\begin{equation}\label{A13}
\frac{g^{2}\left( s\right) }{p}\left( ^{\rho }\mathcal{I}_{\eta ,\kappa }^{\alpha ,\beta }f^{p}
\left( x\right) \right) +\frac{f^{2}\left( s\right) }{q}\left(^{\rho }\mathcal{I}_{\eta ,\kappa }^{\alpha ,\beta }
g^{q}\left( x\right) \right) \leq g^{\frac{2}{p}}\left( s\right) f^{\frac{2}{q}}\left( s\right) \left( ^{\rho }
\mathcal{I}_{\eta ,\kappa }^{\alpha ,\beta }f\left( x\right) g\left( x\right) \right) .
\end{equation}
Again multiplying by $\displaystyle\frac{\rho ^{1-\beta }x^{\kappa }s^{\rho \left( \eta +1\right) -1}}
{\Gamma\left( \alpha \right) \left( x^{\rho }-s^{\rho }\right) ^{1-\alpha }}$ both sides of 
{\rm Eq.\rm(\ref{A13})}, and integrating with respect to the variable $s$ on $(0,x)$, $x>0$, we conclude that
\begin{equation*}
\frac{^{\rho }\mathcal{I}_{\eta ,\kappa }^{\alpha ,\beta }f^{p}\left( x\right) ^{\rho }\mathcal{I}_{\eta ,\kappa }^{\alpha ,\beta }g^{2}\left( x\right) }{p}+
\frac{^{\rho}\mathcal{I}_{\eta ,\kappa }^{\alpha ,\beta }g^{q}\left( x\right) ^{\rho }\mathcal{I}_{\eta,\kappa }^{\alpha ,\beta }f^{2}\left( x\right) }{q}\geq 
\left( ^{\rho}\mathcal{I}_{\eta ,\kappa }^{\alpha ,\beta }f\left( x\right) g\left( x\right) \right)\left( ^{\rho }\mathcal{I}_{\eta ,\kappa }^{\alpha ,\beta }
g^{\frac{2}{p}}\left(x\right) f^{\frac{2}{q}}\left( x\right) \right) .
\end{equation*}
\item 2. Taking $a=\frac{f^{\frac{2}{p}}\left( t\right) }{f\left( s\right) }$ and $b=\frac{g^{\frac{2}{q}} \left( t\right) }{g\left( s\right) }$ with $f\left( s\right) ,g\left( s\right) \neq 0$, and replacing in \textnormal{Eq.(\ref{A1})}, we have
\begin{equation*}
\frac{f^{2}\left( t\right) }{pf^{p}\left( s\right) }+\frac{g^{2}\left(
t\right) }{pg^{p}\left( s\right) }\geq \frac{f^{\frac{2}{p}}\left( t\right)
g^{\frac{2}{q}}\left( t\right) }{f\left( s\right) g\left( s\right) },
\end{equation*}
which can be rewritten as follows,
\begin{equation}\label{A14}
\frac{f^{2}\left( t\right) g^{q}\left( s\right) }{p}+\frac{g^{2}\left(
t\right) f^{p}\left( s\right) }{q}\geq f^{p-1}\left( s\right) g^{q-1}\left(
s\right) f^{\frac{2}{p}}\left( t\right) g^{\frac{2}{q}}\left( t\right).
\end{equation}
Again multiplying by $\displaystyle\frac{\rho ^{1-\beta }x^{\kappa }t^{\rho \left( \eta +1\right) -1}}
{\Gamma \left( \alpha \right) \left( x^{\rho }-t^{\rho }\right) ^{1-\alpha }}$  both sides of 
{\rm Eq.\rm(\ref{A14})}, and integrating with respect to the variable $t$ on $(0,x)$, $x>0$, we have
\begin{eqnarray*}
&&\frac{g^{q}\left( s\right) \rho ^{1-\beta }x^{\kappa }}{p\Gamma \left( \alpha \right) }\int_{0}^{x}\frac{t^{\rho \left( \eta +1\right) -1}}{\left(x^{\rho }-t^{\rho }
\right) ^{1-\alpha }}f^{2}\left( t\right) dt+\frac{f^{p}\left( s\right) \rho ^{1-\beta }x^{\kappa }}{q\Gamma \left( \alpha\right) }\int_{0}^{x}\frac{t^{\rho 
\left( \eta +1\right) -1}}{\left( x^{\rho}-t^{\rho }\right) ^{1-\alpha }}g^{2}\left( t\right) dt  \notag \\
&\geq &\frac{f^{p-1}\left( s\right) g^{q-1}\rho ^{1-\beta }x^{\kappa }}{\Gamma \left( \alpha \right) }\int_{0}^{x}\frac{t^{\rho \left( \eta
+1\right) -1}}{\left( x^{\rho }-t^{\rho }\right) ^{1-\alpha }}f^{\frac{2}{p}}\left( t\right) g^{\frac{2}{q}}\left( t\right) dt,
\end{eqnarray*}
or in the following form
\begin{equation}\label{A15}
\frac{g^{q}\left( s\right) }{p}\left( ^{\rho }\mathcal{I}_{\eta ,\kappa }^{\alpha,\beta }f^{2}\left( x\right) \right) +\frac{f^{p}\left( s\right) }{q}
\left(^{\rho }\mathcal{I}_{\eta ,\kappa }^{\alpha ,\beta }g^{2}\left( x\right) \right) \geq f^{p-1}\left( s\right) g^{q-1}\left( s\right) \left( ^{\rho }
\mathcal{I}_{\eta ,\kappa}^{\alpha ,\beta }f^{\frac{2}{p}}\left( x\right) g^{\frac{2}{q}}\left(x\right) \right).
\end{equation}
Multiplying by $\displaystyle\frac{\rho ^{1-\beta }x^{\kappa }s^{\rho \left( \eta +1\right) -1}}
{\Gamma\left( \alpha \right) \left( x^{\rho }-t^{\rho }\right) ^{1-\alpha }}$ both sides of 
{\rm Eq.\rm(\ref{A15})}, and integrating with respect to the variable $s$ on $(0,x)$, $x>0$, we conclude that
\begin{eqnarray*}
&&\frac{\left( ^{\rho }\mathcal{I}_{\eta ,\kappa }^{\alpha ,\beta
}f^{2}\left( x\right) \right) \left( ^{\rho }\mathcal{I}_{\eta ,\kappa
}^{\alpha ,\beta }g^{q}\left( x\right) \right) }{p}+\frac{\left( ^{\rho }%
\mathcal{I}_{\eta ,\kappa }^{\alpha ,\beta }g^{2}\left( x\right) \right)
\left( ^{\rho }\mathcal{I}_{\eta ,\kappa }^{\alpha ,\beta }f^{p}\left(
x\right) \right) }{q} \\
&\geq &\left( ^{\rho }\mathcal{I}_{\eta ,\kappa }^{\alpha ,\beta }f^{\frac{2%
}{p}}\left( x\right) g^{\frac{2}{q}}\left( x\right) \right) \left( ^{\rho }%
\mathcal{I}_{\eta ,\kappa }^{\alpha ,\beta }f^{p-1}\left( x\right)
g^{q-1}\left( x\right) \right) .
\end{eqnarray*}
\item 3. To prove \textnormal{item (3)}, we consider $a=\frac{f^{\frac{2}{p}}\left( t\right) }
{g\left( s\right) }$ and $b=\frac{f^{\frac{2}{q}}\left( s\right) }{g\left( t\right) }$ with $g\left( s\right), 
g\left( t\right)\neq 0$ and replace in the Young inequality, in the same way as in item {\rm(2)}. 

\end{proof}

\begin{theorem} Let $\alpha>0$, $\beta,\rho,\eta,\kappa\in\mathbb{R}$ and $f,g\in X^{p}_{c}(0,x)$ be two
positive functions on $[0,\infty)$, $x>0$ and  $p,q>1$ satisfying $\frac{1}{p}+\frac{1}{q}=1$. Let
\begin{equation}\label{D1}
m=\underset{0\leq t\leq x}{\min }\frac{f\left( t\right) }{g\left( t\right) } \qquad \text{ and } 
\qquad M=\underset{0\leq t\leq x}{\max }\frac{f\left( t\right) }{g\left(t\right) }\text{ }.
\end{equation}
Then, follow the inequalities:
\begin{enumerate}
\item[\textnormal 1.] $\displaystyle0\leq \left( ^{\rho }\mathcal{I}_{\eta ,\kappa }^{\alpha ,\beta }f^{2}
\left( x\right) ^{\rho }\mathcal{I}_{\eta ,\kappa }^{\alpha ,\beta }g^{2}
\left( x\right) \right) \leq  \frac{\left( M+m\right) ^{2}}{4Mm}\left( ^{\rho }
\mathcal{I}_{\eta ,\kappa }^{\alpha ,\beta }\left( fg\right) \left( x\right) \right) ^{2}$

\item[\textnormal 2.] $\displaystyle0\leq \sqrt{^{\rho }\mathcal{I}_{\eta ,\kappa }^{\alpha ,\beta }f^{2}
\left( x\right) ^{\rho }\mathcal{I}_{\eta ,\kappa }^{\alpha ,\beta }g^{2}\left( x\right) }-
\left( ^{\rho }\mathcal{I}_{\eta ,\kappa }^{\alpha ,\beta }\left( fg\right) \left( x\right) \right) 
\leq \frac{\left( \sqrt{M}-\sqrt{m}\right) ^{2}}{2\sqrt{Mm}}\left( ^{\rho }
\mathcal{I}_{\eta ,\kappa }^{\alpha ,\beta }\left( fg\right) \left( x\right)
\right) $

\item[\textnormal 3.] $\displaystyle0\leq ^{\rho }\mathcal{I}_{\eta ,\kappa }^{\alpha ,\beta }f^{2}\left( x\right) ^{\rho }
\mathcal{I}_{\eta ,\kappa }^{\alpha ,\beta }g^{2}\left( x\right) -
\left( ^{\rho }\mathcal{I}_{\eta ,\kappa }^{\alpha ,\beta }\left( fg\right) \left( x\right) \right) ^{2}
\leq \frac{\left( M-m\right) ^{2}}{4Mm}
\left( ^{\rho }\mathcal{I}_{\eta ,\kappa }^{\alpha ,\beta }\left( fg\right) \left( x\right) \right) ^{2}$
\end{enumerate}
\end{theorem}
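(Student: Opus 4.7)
The plan is to reduce all three bounds to a single pointwise Pólya--Szegő / Diaz--Metcalf inequality, and then combine it with the weighted Cauchy--Schwarz step already used in the proof of Theorem \ref{Gruss-T1}. The starting observation is that the hypothesis $m \leq f(\tau)/g(\tau) \leq M$ on $[0,x]$ implies $(f(\tau) - m g(\tau))(M g(\tau) - f(\tau)) \geq 0$, which rearranges to
\begin{equation*}
f^{2}(\tau) + Mm\, g^{2}(\tau) \leq (M + m)\, f(\tau) g(\tau).
\end{equation*}
The Katugampola kernel $\tfrac{\rho^{1-\beta} x^{\kappa}\, \tau^{\rho(\eta+1)-1}}{\Gamma(\alpha)(x^{\rho}-\tau^{\rho})^{1-\alpha}}$ is nonnegative on $(0,x)$ under the stated hypotheses $\rho > 0$, $\alpha > 0$, $\eta \geq 0$, so multiplying through by it and integrating from $0$ to $x$ preserves the inequality and gives the master relation
\begin{equation*}
{^{\rho}\mathcal{I}_{\eta,\kappa}^{\alpha,\beta}} f^{2}(x) + Mm\, {^{\rho}\mathcal{I}_{\eta,\kappa}^{\alpha,\beta}} g^{2}(x) \leq (M + m)\, {^{\rho}\mathcal{I}_{\eta,\kappa}^{\alpha,\beta}} (fg)(x).
\end{equation*}

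To establish item (1), I would apply the elementary AM--GM bound $2\sqrt{AB} \leq A + B$ to the two terms on the left-hand side, so that
\begin{equation*}
2\sqrt{Mm \cdot {^{\rho}\mathcal{I}_{\eta,\kappa}^{\alpha,\beta}} f^{2}(x) \cdot {^{\rho}\mathcal{I}_{\eta,\kappa}^{\alpha,\beta}} g^{2}(x)} \leq (M + m)\, {^{\rho}\mathcal{I}_{\eta,\kappa}^{\alpha,\beta}} (fg)(x),
\end{equation*}
and squaring yields the stated upper bound, while the lower bound is immediate because the operator sends nonnegative functions to nonnegative values. Item (2) then follows from (1) by extracting square roots and subtracting ${^{\rho}\mathcal{I}_{\eta,\kappa}^{\alpha,\beta}} (fg)(x)$ from both sides, the resulting constant collapsing via the identity $M + m - 2\sqrt{Mm} = (\sqrt{M} - \sqrt{m})^{2}$. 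Its lower bound is the weighted Cauchy--Schwarz inequality $({^{\rho}\mathcal{I}_{\eta,\kappa}^{\alpha,\beta}} (fg)(x))^{2} \leq {^{\rho}\mathcal{I}_{\eta,\kappa}^{\alpha,\beta}} f^{2}(x) \cdot {^{\rho}\mathcal{I}_{\eta,\kappa}^{\alpha,\beta}} g^{2}(x)$, which is the same tool invoked in Theorem \ref{Gruss-T1}. Finally, item (3) is obtained by subtracting $({^{\rho}\mathcal{I}_{\eta,\kappa}^{\alpha,\beta}} (fg)(x))^{2}$ from both sides of (1), the resulting constant collapsing via $(M+m)^{2} - 4Mm = (M-m)^{2}$; its lower bound is again the same weighted Cauchy--Schwarz.

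No real obstacle is present. Care is needed only to (a) confirm that the kernel is nonnegative so that integration preserves the direction of the master pointwise inequality, which holds automatically under the given parameter hypotheses, and (b) invoke the weighted Cauchy--Schwarz, which is identical to the one already deployed earlier in the paper. The remainder of the argument is the algebraic manipulation of the master relation via the two identities $M + m - 2\sqrt{Mm} = (\sqrt{M} - \sqrt{m})^{2}$ and $(M + m)^{2} - 4Mm = (M - m)^{2}$.
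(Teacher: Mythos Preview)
Your proposal is correct and follows essentially the same route as the paper: derive the master relation ${^{\rho}\mathcal{I}_{\eta,\kappa}^{\alpha,\beta}} f^{2}(x) + Mm\, {^{\rho}\mathcal{I}_{\eta,\kappa}^{\alpha,\beta}} g^{2}(x) \leq (M + m)\, {^{\rho}\mathcal{I}_{\eta,\kappa}^{\alpha,\beta}} (fg)(x)$ by integrating the pointwise inequality against the nonnegative kernel, apply AM--GM and square for item~1, then obtain items~2 and~3 by subtracting ${^{\rho}\mathcal{I}_{\eta,\kappa}^{\alpha,\beta}} (fg)(x)$ and its square, respectively. Your explicit justification of the lower bounds in items~2 and~3 via the weighted Cauchy--Schwarz inequality is in fact more careful than the paper, which leaves those $\geq 0$ claims implicit.
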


\begin{proof}
\item 1. From {\rm Eq.\rm(\ref{D1})} and the inequality 
\begin{equation}
\left( \frac{f\left( t\right) }{g\left( t\right) }-m\right) \left( M-\frac{f\left( t\right) }{g\left( t\right) }\right) g^{2}\left( t\right) \geq 0,\ 0\leq t\leq x,
\end{equation}
we can write as
\begin{equation*}
\left( f\left( t\right) -mg\left( t\right) \right) \left( Mg\left( t\right)-f\left( t\right) \right) \geq 0,
\end{equation*}
obtaining the expression,
\begin{equation}\label{D2}
\left( M+m\right) f\left( t\right) g\left( t\right) \geq f^{2}\left(t\right) 
+mMg^{2}\left( t\right) .
\end{equation}
Multiplying by $\displaystyle\frac{\rho ^{1-\beta }x^{\kappa }t^{\rho \left( \eta +1\right) -1}}
{\Gamma \left( \alpha \right) \left( x^{\rho }-t^{\rho }\right) ^{1-\alpha }}$ both sides of  
{\rm Eq.\rm(\ref{D2})}, and integrating with respect to the variable $t$ on $(0,x)$, $x>0$, we have
\begin{equation}\label{D3}
\left( M+m\right) ^{\rho }\mathcal{I}_{\eta ,\kappa }^{\alpha ,\beta }f\left( x\right) 
g\left( x\right) \geq \left( ^{\rho }\mathcal{I}_{\eta ,\kappa }^{\alpha ,\beta }f^{2}
\left( x\right) \right) +mM\left( ^{\rho }\mathcal{I}_{\eta ,\kappa }^{\alpha ,\beta }
g^{2}\left( x\right) \right) .
\end{equation}
On the oher hand, it follows from $Mm>0$ and
\begin{equation}
\left( \sqrt{^{\rho }\mathcal{I}_{\eta ,\kappa }^{\alpha ,\beta }f^{2}\left( x\right) }
-\sqrt{mM\left( ^{\rho }\mathcal{I}_{\eta ,\kappa }^{\alpha ,\beta }g^{2}\left(x\right) \right) }
\right) ^{2}\geq 0,
\end{equation}
that
\begin{equation}\label{D4}
\left( ^{\rho }\mathcal{I}_{\eta ,\kappa }^{\alpha ,\beta }f^{2}\left( x\right)
\right) +mM\left( ^{\rho }\mathcal{I}_{\eta ,\kappa }^{\alpha ,\beta }g^{2}\left(x\right) \right) 
\geq 2\sqrt{\left( ^{\rho }\mathcal{I}_{\eta ,\kappa }^{\alpha ,\beta }f^{2}\left( x\right) 
\right) }\sqrt{mM\left( ^{\rho }\mathcal{I}_{\eta ,\kappa }^{\alpha ,\beta }g^{2}\left( x\right) \right)}.
\end{equation}
From {\rm Eq.\rm(\ref{D3})} and {\rm Eq.\rm(\ref{D4})}, we get
\begin{equation}\label{D5}
2\sqrt{\left( ^{\rho }\mathcal{I}_{\eta ,\kappa }^{\alpha ,\beta }f^{2}\left( x\right) \right) }
\sqrt{mM\left( ^{\rho }\mathcal{I}_{\eta ,\kappa }^{\alpha ,\beta }g^{2}\left( x\right) 
\right) }\leq \left( M+m\right) \left( ^{\rho }\mathcal{I}_{\eta ,\kappa }^{\alpha ,\beta }
f\left( x\right) g\left( x\right) \right).
\end{equation}
Thus, we conclude that
\begin{equation*}
\left( ^{\rho }\mathcal{I}_{\eta ,\kappa }^{\alpha ,\beta }f^{2}\left( x\right)
\right) \left( ^{\rho }\mathcal{I}_{\eta ,\kappa }^{\alpha ,\beta }g^{2}\left(
x\right) \right) \leq \frac{\left( M+m\right) ^{2}}{4}\left( ^{\rho }\mathcal{I}_{\eta ,\kappa }^{\alpha ,\beta }
f\left( x\right) g\left( x\right) \right) ^{2}.
\end{equation*}

\item 2. From {\rm Eq.\rm(\ref{D5})} we have
\begin{equation}\label{D6}
\sqrt{\left( ^{\rho }\mathcal{I}_{\eta ,\kappa }^{\alpha ,\beta }f^{2}\left( x\right) \right) 
\left( ^{\rho }\mathcal{I}_{\eta ,\kappa }^{\alpha ,\beta }g^{2}\left( x\right) \right) }
\leq \frac{\left( M+m\right) }{2\sqrt{mM}}\left( ^{\rho }\mathcal{I}_{\eta ,\kappa }^{\alpha ,\beta }
f\left( x\right) g\left( x\right) \right) .
\end{equation}

Subtracting ${^\rho }\mathcal{I}_{\eta ,\kappa }^{\alpha ,\beta }f\left( x\right) g\left( x\right)$ 
in both sides of {\rm Eq.\rm(\ref{D6})}, we get
\begin{eqnarray*}
\sqrt{\left( ^{\rho }\mathcal{I}_{\eta ,\kappa }^{\alpha ,\beta }f^{2}\left(
x\right) \right) \left( ^{\rho }\mathcal{I}_{\eta ,\kappa }^{\alpha ,\beta
}g^{2}\left( x\right) \right) }-\left( ^{\rho }\mathcal{I}_{\eta ,\kappa
}^{\alpha ,\beta }f\left( x\right) g\left( x\right) \right)  &\leq &\frac{%
\left( M+m\right) }{2\sqrt{mM}}\left( ^{\rho }\mathcal{I}_{\eta ,\kappa
}^{\alpha ,\beta }f\left( x\right) g\left( x\right) \right)  \\
&&-\left( ^{\rho }\mathcal{I}_{\eta ,\kappa }^{\alpha ,\beta }f\left(
x\right) g\left( x\right) \right) .
\end{eqnarray*}
Thus, we conclude that
\begin{equation*}
\sqrt{\left( ^{\rho }\mathcal{I}_{\eta ,\kappa }^{\alpha ,\beta }f^{2}\left( x\right) \right) 
\left( ^{\rho }\mathcal{I}_{\eta ,\kappa }^{\alpha ,\beta }g^{2}\left( x\right) \right) }-
\left( ^{\rho }\mathcal{I}_{\eta ,\kappa }^{\alpha ,\beta }f\left( x\right) g\left( x\right) \right) 
\leq \frac{\left( \sqrt{M}-\sqrt{m}\right) ^{2}}{2\sqrt{mM}}\left( ^{\rho }
\mathcal{I}_{\eta ,\kappa }^{\alpha ,\beta }f\left( x\right) g\left( x\right) \right) .
\end{equation*}

\item {\rm(3)} Subtracting $\left( ^{\rho }\mathcal{I}_{\eta ,\kappa }^{\alpha ,\beta }
f\left( x\right)g\left( x\right) \right) ^{2}$ of the {\rm Eq.\rm(\ref{D5})} and with 
the same procedure as in item {\rm(2)}, we conclude the proof. 
\end{proof}

\section*{Concluding remarks}

From the fractional integral unifying six existing fractional integrals, as proposed by Katugampola, it was possible to generalize inequalities of Gruss-type obtaining, as a particular case,the well-known inequality involving Riemann-Liouville fractional integral. In this sense, we also proved other inequalities using the Katugampola's fractional integral. A natural continuation of this paper, with this formulation, consists into generalize inequalities of Hermite-Hadamard and Hermite-Hadamard-Fej\'er \cite{CHU}, as well as to propose inequalities using the so-called $M$-fractional integral recently introduced \cite{edmundo}.

\bibliography{ref}
\bibliographystyle{plain}

\end{document}